\documentclass[12pt]{amsart}
\usepackage[T1]{fontenc}
\usepackage[a4paper,margin=1in]{geometry}
\usepackage{graphicx}
\usepackage{amssymb}
\usepackage{stmaryrd}
\usepackage{enumitem}
\usepackage{tikz}
\usepackage{float}
\usepackage{url}
\usepackage{mathtools}
\usepackage{amsmath,amsfonts,amsthm}
\usepackage{mathrsfs} 
\newcommand{\case}[1]{\paragraph*{Case #1:}}

\newtheorem{theorem}{Theorem}[section]
\newtheorem{lemma}[theorem]{Lemma}
\newtheorem{proposition}[theorem]{Proposition}
\newtheorem{corollary}[theorem]{Corollary}
\newtheorem{definition}{Definition}
\usepackage{multirow}
\newcommand{\SO}{\mathrm{SO}}
\newcommand{\SOG}{\mathrm{SO}^{\mu}}
\newcommand{\PP}{\mathcal{P}}
\newcommand{\SN}{\mathcal{S}}
\newcommand{\KK}{\mathrm{K}}
\newcommand{\TG}{\mathcal{T}_{\Gamma}}

\usepackage[colorlinks,
linkcolor=blue,
anchorcolor=blue,
citecolor=blue
]{hyperref}

\author{J. Hamoud}
\address{\textbf{Jasem Hamoud:} Department of Discrete Mathematics, Moscow Institute of Physics and Technology}
\email{hamoud.math@gmail.com}
\thanks{Corresponding author.}

\title[Sharp Bounds and Extremal Fuzzy Graphs]{Sharp Bounds and Extremal Fuzzy Graphs for the Fuzzy Sombor Index}

\date{}
\begin{document}

\begin{abstract}
The fuzzy Sombor index applies the classical Sombor index to fuzzy graphs, incorporating both edge membership values and fuzzy vertex degrees. For $\alpha>1$, the general fuzzy Sombor index it is defined as 
\[
 \mathrm{SO}^{\mu}_{\alpha}(\Gamma)=\sum_{uv\in V(\Gamma)}\left(\mu(u,v)\,\sqrt{\mu_u^2+\mu_v^2}\right)^{\alpha}.
\]
This paper analyses extremal features of $\mathrm{SO}^{\mu}$ across different types of fuzzy graphs. We determine the maximum value (resp. minimum value) of $\mathrm{SO}^{\mu}$ characterise in regular fuzzy graph. We established significant inequality between the fuzzy Sombor index and other well-known fuzzy topological indices.
\end{abstract}

\maketitle

\noindent\rule{15.9cm}{1.0pt}

\noindent
\textbf{
Keywords:} Fuzzy graph, Fuzzy Sombor index, Topological index, Zagreb index, Extremal trees, Fuzzy degree.

\medskip

\noindent
{\bf
MSC 2020:} 05C07, 05C09, 05C35, 05C76.

\medskip

\noindent
{\bf
UDC:} 519.172.1

\noindent\rule{15.9cm}{1.0pt}

\section{Introduction}
Graph invariants built around vertex degrees have been extensively investigated due to their simplicity and efficacy. The Sombor index is defined~\cite{Geometric2021approach} for a graph $G$ as follows
\[
\SO(G)=\sum_{uv\in E(G)}\sqrt{d_G(u)^2+d_G(v)^2}.
\]
This topological index~\cite{Cruz2021Gutman} was inspired by the mathematical description of the degree radius of an edge $uv$, which is~\cite{Das2021Shang} the distance between the origin and the ordered pair $(d_u, d_v)$, where $d_u \leq d_v$. 
Using this index has demonstrated a wide range of applications in chemical graph theory and analysis of networks~\cite{Cruz2021Gutman}. Several essential mathematical features, including lower and upper bounds, may appear observed in~\cite{Todeschini2000, Cruz2021Rada, Geometric2021approach, Das2021Cevik}. 

Let $\Gamma=(V,\nu,\mu)$  be a fuzzy graph~\cite{sunitha1999fuzzytrees} where the vertex membership function $\nu:V(\Gamma)\to [0,1]$ and the edge membership function $\mu:V\times V\to[0,1]$ such that $\mu(u,v)\leqslant 
 \min\{\nu(u),\nu(v)\}.$

A \textit{tree} is an associated acyclic graph. Trees are essential items in graph theory and chemical graph theory where they represent molecular structures. The study of extreme trees, which minimise or maximise a particular topological index among all trees of a certain order, is an essential topic in this discipline~\cite{cruz2020extremal}. For an aspect vertex in a tree is a vertex with at least three degrees. Cruz, Rada, \& Sigarreta ~\cite{cruz2021threependent} analysed the extremal values of the Sombor index in trees with up to three pendent vertices. In trees with only a single division vertex, known as \emph{starlike trees}, the Sombor index is maximize when the pendentes are imbalanced one long pendent and the rest pendant edges, and minimize when the pendentes are balanced. The concept of {\it fuzzy tree} had established in~\cite{sunitha1999fuzzytrees} wherer it is defined as a fuzzy graph with a tree as its underlying crisp graph. Sunitha and Vijayakumar~\cite{sunitha1999fuzzytrees} formulated fuzzy trees utilising the notions of fuzzy bridges and fuzzy cutnodes. A bridge fuzzy graph is a fuzzy tree if and only if it contains a single maximum spanning tree.

The concept of {\it Fuzzy Sombor index} has two distinct but associated implication. Fuzzy vertex membership and edge strength values of graphs, within which degrees can be considered fuzzy degrees, and topological indices are extended to the fuzzy framework.
The term {\it fuzzy} relates to the optimisation versus uncertainty context; nevertheless, this looks less common because the Sombor index is often examined on crisp graphs.  The \textit{fuzzy Sombor index} it is defined as

\begin{equation}~\label{eqq1basicsombor}
 \SOG(\Gamma)=\sum_{uv\in V(\Gamma)}\mu(u,v)\,\sqrt{\mu_u^2+\mu_v^2},   
\end{equation}
where $\mu_u$ is the fuzzy degree of vertex $u$. For $\alpha>1$, the \textit{general fuzzy Sombor index} it is defined as
\begin{equation}~\label{eqq1advsombor}
 \SOG_{\alpha}(\Gamma)=\sum_{uv\in V(\Gamma)}\left(\mu(u,v)\,\sqrt{\mu_u^2+\mu_v^2}\right)^{\alpha}.
\end{equation}
Although several studies have determined the fuzzy Sombor index for certain families, such as fuzzy paths, cycles, stars, and tadpole graphs, as well as confirmed basic order and size constraints, comprehensive extremal classifications persist as unexplored. 
In the crisp instance, stars and routes frequently appear as extrema for several degree-based indices among trees. One immediately wonders if similar occurrences exist in the fuzzy situation, where both edge strengths and vertex memberships contribute new degrees of flexibility.

\section{Preliminaries}
Let $\mu_u$ be the \emph{fuzzy degree} of a vertex $u \in V(\Gamma)$ and $m_{\mu}$ be the \emph{fuzzy size} are defined as
\[
\mu_u = \sum_{v \in V} \mu(uv), \quad m_{\mu} = \sum_{uv \in E(\Gamma)} \mu(uv),
\]
where the sum is over pairs with $\mu(uv) > 0$. The fuzzy degree $\mu_u$ is a generalisation of the classical degree and reduces to it when all relevant memberships are one. 

Actually, each term $\mu(uv) \sqrt{\mu_u^2 + \mu_v^2}$ weights the ``Euclidean contribution'' of the fuzzy degrees of the endpoints by the strength of the connecting edge. Since all memberships are 0 or 1, $\SOG(\Gamma)$ becomes the conventional Sombor index of the underlying crisp graph.

The extremal challenge for the Sombor index on fuzzy trees would be to determine which structures minimise or maximise the index among all fuzzy trees of a particular order with given vertex membership values or total fuzzy edge weight (see~\cite{Akram2023fuzzy, Kosari2023some}). This field appears to be mostly unexplored in the present literature. The corresponding conclusions for crisp trees (path minimises, star maximises) would need to be re-examined in the fuzzy context because:
\begin{enumerate}
    \item Fuzzy degrees are real values in the range (0,1), not integers.
    \item The condition that the total of degrees equals twice the number of edges remains, but degrees can take non-integer values.
    \item New optimisation parameters are introduced by additional constraints from fuzzy graph axioms $\mu(u,v) \leq \min(\nu(u), \nu(v)$.
\end{enumerate}

\begin{definition}[S.~Kalathian et al.~\cite{Kalathian2020S}]~\label{ZagrebIndices001}
The fuzzy first and second Zagreb indices defined as
\[
M_1(\Gamma)=\sum_{u \in V} \mu_u^2, \qquad M_2(\Gamma)=\sum_{uv \in E(\Gamma)} \mu_u \mu_v.
\]
\end{definition}

\begin{definition}[S.~Poulik et al.~\cite{Poulikrad2024}]~\label{fuzzrand001}
The fuzzy Randi\'{c} index is defined as
\[
R(\Gamma) = \sum_{uv \in E(\Gamma)} \frac{\mu(uv)}{\sqrt{\mu_u \mu_v}}.
\]
\end{definition}

\begin{definition}[S.~Kalathian et al.~\cite{Kalathian2020S}]~\label{deffNirmala001}
The fuzzy Nirmala index is
\[
N(\Gamma) = \sum_{uv \in E(\Gamma)} \frac{\mu_u + \mu_v}{\sqrt{\mu_u \mu_v}}.
\]
\end{definition}
\begin{proposition}~\label{proposomb01}
For any fuzzy graph $\Gamma$ on $n$ vertices, $\sum_{u \in V} \mu_u = 2 m_{\mu}.$
\end{proposition}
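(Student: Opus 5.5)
This is the fuzzy analogue of the handshake lemma, and the plan is a double-counting argument over ordered pairs of vertices. Throughout, $\mu(uv)=\mu(u,v)$ is treated as a symmetric function on unordered pairs. I also adopt the convention implicit in the definition of fuzzy degree that $\mu(uu)=0$, i.e.\ there are no fuzzy loops, and that $\mu(uv)>0$ exactly when $uv\in E(\Gamma)$. All sums are finite because $|V|=n$, so they may be reordered freely.

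First I write out the left-hand side using the definition of $\mu_u$:
\[
\sum_{u\in V}\mu_u=\sum_{u\in V}\sum_{v\in V}\mu(uv),
\]
where the terms with $\mu(uv)=0$ contribute nothing. This is a sum of $\mu(uv)$ over all ordered pairs $(u,v)\in V\times V$. Next I partition these ordered pairs according to the unordered pair $\{u,v\}$. Diagonal pairs $(u,u)$ vanish because there are no loops. Each unordered pair $\{u,v\}$ with $u\neq v$ arises from exactly two ordered pairs, $(u,v)$ and $(v,u)$. By symmetry of $\mu$, both of these contribute the same value $\mu(uv)$. Hence
\[
\sum_{u\in V}\mu_u=2\sum_{\{u,v\},\,u\neq v}\mu(uv)=2\sum_{uv\in E(\Gamma)}\mu(uv)=2m_{\mu},
\]
where the middle equality uses that the pairs with $\mu(uv)>0$ are exactly the edges of $\Gamma$.

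I do not expect a real obstacle here. The only point needing care is being explicit about the conventions: symmetry of $\mu$, the absence of loops, and the fact that restricting to pairs with $\mu(uv)>0$ is harmless. If loops were allowed, the identity would need a correction term, so I would state the no-loop assumption explicitly. As a sanity check, when all memberships equal $1$ the argument reduces to the classical handshake lemma $\sum_u d(u)=2|E|$.
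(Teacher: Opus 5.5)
Your double-counting argument is correct and complete. The paper states this proposition without any proof, treating it as the standard fuzzy handshake lemma, and your argument is exactly the justification it implicitly relies on. Making the symmetry of $\mu$ and the no-loop convention $\mu(uu)=0$ explicit is appropriate, since the paper's definition of $\mu_u$ leaves both implicit.
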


\begin{theorem}[Some \& Pal~\cite{Some2025Pal}]~\label{Thmper001}
For any fuzzy graph $\Gamma$, $\SOG(\Gamma)\leqslant \sqrt{2}m(n-1)$.  
\end{theorem}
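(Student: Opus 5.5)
We bound each summand of \eqref{eqq1basicsombor} separately and then sum over the edges. Here $m$ denotes the number of edges of the underlying crisp graph and $n=|V(\Gamma)|$. The only facts we need are the fuzzy graph axiom $\mu(u,v)\leqslant\min\{\nu(u),\nu(v)\}\leqslant 1$ and the definition of the fuzzy degree $\mu_u=\sum_{v}\mu(uv)$.

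\textbf{Step 1: bound the fuzzy degrees.} Every edge membership satisfies $0<\mu(uv)\leqslant 1$. Assuming $\Gamma$ is simple, each vertex $u$ has at most $n-1$ neighbours $v$ with $\mu(uv)>0$. Hence
\[
\mu_u=\sum_{v\neq u}\mu(uv)\leqslant d_G(u)\leqslant n-1 .
\]

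\textbf{Step 2: bound each edge term.} For every edge $uv$, Step 1 gives $\mu_u^2+\mu_v^2\leqslant 2(n-1)^2$, so $\sqrt{\mu_u^2+\mu_v^2}\leqslant\sqrt{2}\,(n-1)$. Multiplying by $\mu(u,v)\leqslant 1$ yields
\[
\mu(u,v)\sqrt{\mu_u^2+\mu_v^2}\leqslant \sqrt{2}\,(n-1).
\]

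\textbf{Step 3: sum over the edges.} There are $m$ edges with positive membership, so summing Step 2 gives $\SOG(\Gamma)\leqslant\sqrt{2}\,m(n-1)$, which is the claimed bound.

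\textbf{Possible sharpening and expected obstacle.} Instead of bounding $\mu(u,v)$ by $1$, one could keep it and obtain the stronger estimate $\SOG(\Gamma)\leqslant\sqrt{2}(n-1)m_\mu$. Since $m_\mu\leqslant m$, this implies the stated bound. The argument has no real obstacle; the main point of care is interpretation. The sum in \eqref{eqq1basicsombor} must be read as running over edges, i.e.\ pairs with $\mu(uv)>0$, and $m$ must be the crisp edge count, both of which are needed for Step 3. Equality holds exactly when every edge has membership $1$ and every vertex has degree $n-1$, that is, for the crisp complete graph $K_n$. In that case both sides equal $\sqrt{2}\,m(n-1)$, so the bound is sharp.
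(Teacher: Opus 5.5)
\textbf{Verdict.} Your argument is correct. From $\mu(uv)\leqslant 1$ and the fact that a vertex has at most $n-1$ neighbours you get $\mu_u\leqslant n-1$. Hence each edge term is at most $\mu(uv)\sqrt{2}\,(n-1)\leqslant\sqrt{2}\,(n-1)$, and summing over the $m$ edges gives the bound.

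\textbf{No in-paper proof to compare.} The paper states this result as a citation to Some \& Pal and gives no argument of its own. Your termwise estimate is the natural proof. Your intermediate form $\SOG(\Gamma)\leqslant\sqrt{2}\,(n-1)\,m_{\mu}$ is worth keeping, because the paper never defines the bare symbol $m$. Since $m_\mu\leqslant m$, that one inequality settles the statement whether $m$ means the crisp edge count or the fuzzy size $m_\mu$.

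\textbf{Minor points.}
\begin{itemize}
\item Your interpretive caveat about summing over edges is not really needed. If \eqref{eqq1basicsombor} is read over all unordered pairs, pairs with $\mu(uv)=0$ contribute zero, so nothing changes.
\item Your equality characterisation (all memberships $1$ on the crisp $K_n$) is right for $m\geqslant 1$. You should also allow the trivial edgeless case, where both sides vanish.
\item If you keep $\max_u\mu_u$ rather than replacing it by $n-1$, you obtain $\SOG(\Gamma)\leqslant\sqrt{2}\,\max_u\mu_u\,m_{\mu}$. This is exactly the form the paper invokes later in the proof of Theorem~\ref{ZagrebThm01}.
\end{itemize}
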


\subsection{Problem Statement}

Regarding the increasing interest in fuzzy topological indices, the extremal behaviour of the fuzzy Sombor index is mostly unknown. 
Despite different papers compute the index for specific families such as paths, cycles, stars, and tadpole graphs, systematic characterisations of fuzzy graphs that minimise or maximise the index under natural restrictions are still lacking. 
This study discusses the following main issues:

\begin{enumerate}
\item Characterize $\SOG$ on families fuzzy graphs with $n$ vertices that archives $\SOG_{\max}$ and $\SOG_{\min}$. 

\item Compute the extremal fuzzy trees and fuzzy unicyclic graphs in relation to $\SOG(\Gamma)$ for a certain order $n$ and fuzzy size $m_{\mu}$.

\item Demonstrate sharp inequalities between the fuzzy Sombor index and other influential fuzzy topological indices, such as the fuzzy Zagreb, Randić, and Nirmala indexes.
\end{enumerate}

Addressing these challenges not only advances to the theory of fuzzy graph indices, but also gives valuable instruments for detecting fundamentally relevant assemblages in modification fuzzy networks.
\section{Fuzzy Sombor Index on Families of Fuzzy Graphs}
Determining topological indices behaviour on critical graph families is an essential challenge in fuzzy graph theory. In this section, we compute and analyse the fuzzy Sombor index for several standard families, such as fuzzy paths, fuzzy cycles, fuzzy stars, and fuzzy tadpole graphs.  Such sharp bounds are useful for assessing performance, comparing networks, and applying fuzzy modelling. 

\begin{proposition}~\label{MainPro001}
In fuzzy graphs with $n$ vertices and a fixed fuzzy size $m_{\mu}$, the fuzzy Sombor index is minimised when positive memberships create a fuzzy matching. 
\end{proposition}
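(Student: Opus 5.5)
The plan is to prove a lower bound on $\SOG(\Gamma)$ that depends only on $n$ and $m_{\mu}$, and then to show that a balanced fuzzy matching attains it. The bound will come from two elementary inequalities: one on each edge, and one over all vertices. Proposition~\ref{proposomb01} ties the fuzzy degrees to $m_{\mu}$.

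\emph{Step 1 (edgewise).} By the QM--AM inequality, for every edge $uv$ we have $\sqrt{\mu_u^2+\mu_v^2}\geqslant (\mu_u+\mu_v)/\sqrt{2}$. Summing with weights $\mu(uv)$ gives
\[
\SOG(\Gamma)\geqslant \frac{1}{\sqrt2}\sum_{uv\in E(\Gamma)}\mu(uv)(\mu_u+\mu_v)=\frac{1}{\sqrt2}\sum_{u\in V}\mu_u\sum_{v}\mu(uv)=\frac{1}{\sqrt2}\sum_{u\in V}\mu_u^2 .
\]
The right-hand side is $M_1(\Gamma)/\sqrt2$, in the notation of Definition~\ref{ZagrebIndices001}. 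Equality holds iff $\mu_u=\mu_v$ on every edge with positive membership.

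\emph{Step 2 (vertexwise).} By Cauchy--Schwarz and Proposition~\ref{proposomb01}, $\sum_u\mu_u^2\geqslant \frac1n\bigl(\sum_u\mu_u\bigr)^2=\frac{4m_{\mu}^2}{n}$. Hence
\[
\SOG(\Gamma)\geqslant \frac{2\sqrt2\, m_{\mu}^2}{n},
\]
with equality iff all fuzzy degrees equal $2m_{\mu}/n$.

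\emph{Step 3 (attainment by a matching).} Take $n$ even and a perfect matching in which every edge has membership $2m_{\mu}/n$. Choose vertex memberships $\nu$ large enough that the axiom $\mu(u,v)\leqslant\min\{\nu(u),\nu(v)\}$ holds; this needs $2m_{\mu}/n\leqslant 1$. In this graph each endpoint of an edge $e$ has fuzzy degree $\mu(e)$. So the term of $e$ is $\mu(e)\sqrt{2\mu(e)^2}=\sqrt2\,\mu(e)^2$, and the total is $\sqrt2\cdot\frac n2\cdot\frac{4m_{\mu}^2}{n^2}=\frac{2\sqrt2 m_{\mu}^2}{n}$. This matches the bound, so the minimum is realised by a fuzzy matching.

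A complementary observation makes the role of matchings transparent. Since $\mu_u,\mu_v\geqslant\mu(uv)$, every term satisfies $\mu(uv)\sqrt{\mu_u^2+\mu_v^2}\geqslant\sqrt2\,\mu(uv)^2$. Equality holds exactly when no other positive edge meets $u$ or $v$, that is, when the support is a matching. Thus, for a fixed multiset of edge weights, the matching arrangement is the unique minimiser.

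The main obstacle is what "minimised when" should mean. The equality cases in Steps 1--2 are exactly the fuzzy graphs that are $\mu$-regular with equal degrees on each edge. For instance, a uniformly weighted complete fuzzy graph also gives $2\sqrt2 m_{\mu}^2/n$. So the statement can only assert that a fuzzy matching \emph{attains} the minimum, not that it is the unique minimiser. The proof should say this explicitly and cite the fixed-weights observation as the sense in which matchings are canonical.

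Parity and feasibility also need a remark. For odd $n$, or when $2m_{\mu}/n>1$, a perfect matching cannot carry the balanced weights. In those cases I would restrict to near-perfect matchings and state the bound $\sqrt2\sum_e\mu(e)^2$ minimised over admissible matching weight vectors. By convexity this is minimised by equal weights on $\lfloor n/2\rfloor$ edges, subject to each weight being at most $1$.
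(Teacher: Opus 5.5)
The paper states this proposition without proof; it is followed only by a one-line remark. So there is no argument to compare yours against.

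Your Steps 1--3 are correct and prove more than is claimed. They give $\SOG(\Gamma)\geqslant M_1(\Gamma)/\sqrt2\geqslant 2\sqrt2\,m_{\mu}^2/n$, with equality exactly when all $n$ fuzzy degrees equal $2m_{\mu}/n$. Your non-uniqueness remark is also right, and the paper's own Table~\ref{tab01fuzzy} shows it. There the uniform cycle and complete graph both take the value $2\sqrt2\,m_{\mu}^2/n$ (e.g.\ $0.2828$ for $n=10$, $m_{\mu}=1$).

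The gap is in your last paragraph. Your equality characterisation already shows that a fuzzy matching is a minimiser if and only if it is \emph{perfect} with all memberships equal to $2m_{\mu}/n$. Any unmatched vertex has fuzzy degree $0$ and breaks equality in Step~2.

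Hence for odd $n$ the proposition is not merely harder to prove; it is false. A fuzzy matching has $k\leqslant (n-1)/2$ positive edges. By your complementary observation and Cauchy--Schwarz,
\[
\SOG(\Gamma)=\sqrt2\sum_e\mu(e)^2\geqslant \frac{\sqrt2\,m_{\mu}^2}{k}\geqslant \frac{2\sqrt2\,m_{\mu}^2}{n-1}.
\]
This strictly exceeds $2\sqrt2\,m_{\mu}^2/n$, the value of the uniform fuzzy complete graph with memberships $m_{\mu}/\binom{n}{2}$, which is always admissible. Concretely, take $n=3$ and $m_{\mu}=1$. The uniform fuzzy triangle gives $2\sqrt2/3$, while the only fuzzy matching (one edge of membership $1$) gives $\sqrt2$. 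Optimising over near-perfect matchings answers ``which matching is best'', not the proposition.

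Likewise, for $m_{\mu}>\lfloor n/2\rfloor$ no fuzzy matching has fuzzy size $m_{\mu}$ at all, since each membership is at most $1$. So the constraint in your convexity step does not shift the optimum; it makes the feasible set empty.

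The proof should therefore state the hypotheses $n$ even and $0<m_{\mu}\leqslant n/2$. It should conclude that the balanced perfect fuzzy matching attains the minimum $2\sqrt2\,m_{\mu}^2/n$, not uniquely, with the minimisers being exactly the $\mu$-regular fuzzy graphs. It should also record explicitly that the claim fails outside this range.
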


In fact, Proposition~\ref{MainPro001} remains sharply when high-degree vertices are avoided. 
Theorem~\ref{MainThmSom1} had established the sharp bound of $\SOG$, where it emphasizes that among all fuzzy graphs with fixed order $n$ and no further constraints on size, the minimum $\SOG(\Gamma) = 0$ is achieved by the empty fuzzy graph (all $\mu(uv)=0$).

\begin{theorem}~\label{MainThmSom1}
Let $\Gamma$ be a fuzzy graph with minimum fuzzy degree   $\delta_{\mu} = \min_u \mu_u$. Then, 
\begin{equation}~\label{eqq1MainThmSom1}
\SOG(\Gamma) \geqslant \sqrt{2} \, m_{\mu} \cdot \delta_{\mu},
\end{equation}
\end{theorem}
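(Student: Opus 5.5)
The plan is a termwise estimate followed by summation over the edge set. Throughout, the sum in \eqref{eqq1basicsombor} is read as running over the edges $uv\in E(\Gamma)$, i.e.\ over pairs with $\mu(uv)>0$; pairs with $\mu(uv)=0$ contribute nothing in any case.

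The first step bounds the radical on each edge. For every edge $uv$ we have $\mu_u\geqslant \delta_{\mu}$ and $\mu_v\geqslant\delta_{\mu}$, since $\delta_{\mu}=\min_w \mu_w$. All fuzzy degrees are nonnegative, and $t\mapsto t^2$ is increasing on $[0,\infty)$, so
\[
\sqrt{\mu_u^2+\mu_v^2}\;\geqslant\;\sqrt{\delta_{\mu}^2+\delta_{\mu}^2}\;=\;\sqrt{2}\,\delta_{\mu}.
\]
Alternatively, one can use the quadratic-mean/arithmetic-mean inequality $\sqrt{\mu_u^2+\mu_v^2}\geqslant (\mu_u+\mu_v)/\sqrt{2}$ and then $\mu_u+\mu_v\geqslant 2\delta_\mu$. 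This gives the same constant and is worth noting in case one later wants a sharper intermediate bound in terms of $\mu_u+\mu_v$.

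The second step multiplies by the edge weight and sums. Since $\mu(uv)\geqslant 0$, the termwise bound gives $\mu(uv)\sqrt{\mu_u^2+\mu_v^2}\geqslant \sqrt{2}\,\delta_{\mu}\,\mu(uv)$. Summing over all edges and using the definition $m_{\mu}=\sum_{uv\in E(\Gamma)}\mu(uv)$ from the preliminaries yields
\[
\SOG(\Gamma)\;\geqslant\;\sqrt{2}\,\delta_{\mu}\sum_{uv\in E(\Gamma)}\mu(uv)\;=\;\sqrt{2}\,m_{\mu}\,\delta_{\mu},
\]
which is exactly \eqref{eqq1MainThmSom1}.

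The third step is the equality case, which I expect to be the only part needing care. Equality in the first step requires $\mu_u=\mu_v=\delta_\mu$ for every edge $uv$. Equality holds when every vertex incident to an edge has fuzzy degree $\delta_\mu$, so the bound is sharp for regular fuzzy graphs (all $\mu_u$ equal). Isolated vertices are the one subtlety: an isolated vertex makes $\delta_\mu=0$, and then the inequality holds trivially. I would note this rather than prove a converse.
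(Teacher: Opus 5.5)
Your proposal is correct and follows essentially the same route as the paper: bound $\sqrt{\mu_u^2+\mu_v^2}\geqslant\sqrt{2}\,\delta_{\mu}$ on each edge, multiply by $\mu(uv)\geqslant 0$, and sum. You also make explicit the final summation against $m_{\mu}=\sum_{uv\in E(\Gamma)}\mu(uv)$, which the paper leaves implicit while citing the identity $\sum_u\mu_u=2m_{\mu}$ that the argument does not actually need, and your equality discussion is accurate.
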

\begin{proof}
Since $\delta_\mu=\min_{w\in V}\mu_w$, and according to Proposition~\ref{proposomb01} we obtain $\sum_{u \in V} \mu_u = 2 m_{\mu}.$ Thus, for every vertex $\mu_u\geqslant \delta_\mu$ and $\mu_v\ge \delta_\mu$. Since $0\leqslant \mu(uv)\leqslant 1$ for all edges, 
\begin{equation}~\label{eqq2MainThmSom1}
\sqrt{\mu_u^2+\mu_v^2}\geqslant \sqrt{\delta_\mu^2+\delta_\mu^2}
= \sqrt{2}\,\delta_\mu.
\end{equation}
If there is no restriction on $m_\mu$, presume that $\mu(uv)=0$ for any pair $u,v$. Then $m_\mu=0$, and $\SOG(\Gamma)=0$. Clearly, $\SOG(\Gamma)\ge 0$ always true, as each item in the sum is non-negative, hence $0$ is the ultimate minimum, obtained precisely when $\Gamma$ is the empty fuzzy graph with zero membership. 
Thus, by multiplying both sides of the inequality~\eqref{eqq2MainThmSom1} by the non-negative quantity $\mu(uv)$ satisfying 
\begin{equation}~\label{eqq3MainThmSom1}
\mu(uv)\sqrt{\mu_u^2+\mu_v^2}\geqslant \mu(uv)\,\delta_\mu\sqrt{2}. 
\end{equation}
Hence, based on~\eqref{eqq3MainThmSom1}, if $\mu_u=\mu_v=\delta_\mu$, then  $\sqrt{\mu_u^2+\mu_v^2}\geqslant \sqrt{2}\,\delta_\mu$. Thus, the relationship~\eqref{eqq1MainThmSom1} holds. 
\end{proof}

For nontrivial bounds, we only consider fuzzy graphs with positive $m_{\mu}$ or associated underlying support. 
A more intriguing vulnerable is the minimum among fuzzy graphs with $m_{\mu} > 0$. However, because memberships might be concentrated, the minimum usually occurs when the graph is \textit{spread out}. 
Theorem~\ref{MainThmSom2} emphasizes that for  a fuzzy path with uniform edge membership values chosen to normalize the total fuzzy size  $\PP_n^f$  with equality when $\Gamma$ is a fuzzy path in which fuzzy degrees are as balanced as possible.


\begin{theorem}~\label{MainThmSom2}
Let $\Gamma$ be a connected fuzzy graph on $n \geq 2$ vertices. Then $\SOG(\Gamma) \geqslant \SOG(\PP_n^f).$
\end{theorem}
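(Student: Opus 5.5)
The plan is to compare both sides through the fuzzy first Zagreb index of Definition~\ref{ZagrebIndices001}, under the normalisation the paper itself describes before the statement. That is, $\PP_n^f$ is the fuzzy path on $n$ vertices whose $n-1$ edges all carry the same membership $t=m_{\mu}/(n-1)$, where $m_{\mu}$ is the fuzzy size of $\Gamma$. \textbf{As I will show in the third step, this route breaks down, and in fact the statement fails as worded under this normalisation.}

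First, I will compute the right-hand side explicitly. In $\PP_n^f$ the two end vertices have fuzzy degree $t$ and the $n-2$ interior vertices have fuzzy degree $2t$. Summing over the two end edges and the $n-3$ interior edges gives
\[
\SOG(\PP_n^f)=t^2\bigl(2\sqrt5+2\sqrt2\,(n-3)\bigr)=\frac{m_{\mu}^2}{(n-1)^2}\bigl(2\sqrt5+2\sqrt2\,(n-3)\bigr)
\]
for $n\ge3$. The case $n=2$ is trivial, since then $\Gamma$ is a single edge.

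Second, I will bound $\SOG(\Gamma)$ from below for an arbitrary connected $\Gamma$ of fuzzy size $m_{\mu}$. The inequality $\sqrt{a^2+b^2}\ge (a+b)/\sqrt2$ gives
\[
\SOG(\Gamma)\ge\frac1{\sqrt2}\sum_{uv}\mu(uv)(\mu_u+\mu_v).
\]
Regrouping this sum by vertices gives $\sum_u\mu_u\sum_{v}\mu(uv)=\sum_u\mu_u^2=M_1(\Gamma)$. By Cauchy--Schwarz together with Proposition~\ref{proposomb01}, we have $M_1(\Gamma)\ge (2m_{\mu})^2/n$. Hence
\[
\SOG(\Gamma)\ge \frac{2\sqrt2\,m_{\mu}^2}{n}.
\]
Equality holds in both steps exactly when all fuzzy degrees are equal.

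Third, the theorem would follow if this Zagreb bound dominated the closed form for $\SOG(\PP_n^f)$. Comparing the two expressions, that requirement reduces to $n(\sqrt5-\sqrt2)\le\sqrt2$, which fails for every $n\ge2$. The failure is not merely a weakness of the estimate, because the bound is attained. Take the uniform fuzzy cycle $C_n^f$ with $n\ge3$ and every membership equal to $m_{\mu}/n$. It is connected and has
\[
\SOG(C_n^f)=\frac{2\sqrt2\,m_{\mu}^2}{n},
\]
and this is strictly smaller than $\SOG(\PP_n^f)$. So for an arbitrary connected fuzzy graph of the same fuzzy size, the inequality as worded does not hold. Checking this comparison is the step I expect to be decisive. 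I would verify it numerically first, for example at $n=4$ with $m_{\mu}=1$, where the cycle gives $\sqrt2/2\approx0.707$ and the path gives $(2\sqrt5+2\sqrt2)/9\approx0.811$.

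If the intended statement restricts $\Gamma$ to fuzzy trees, or instead normalises $\PP_n^f$ so that it shares the maximum edge membership of $\Gamma$ rather than its fuzzy size, I would adapt the second step accordingly. For trees, I would refine the Cauchy--Schwarz step using the fact that at least two vertices are leaves, whose fuzzy degree equals a single edge membership. I would also test non-uniform fuzzy paths, with smaller memberships on the end edges, before trusting that the uniform path is the minimiser at all.
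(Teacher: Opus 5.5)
Your disproof is correct. With $\PP_n^f$ normalised to the fuzzy size of $\Gamma$, which is exactly the normalisation the paper's proof uses ($\sum\eta_i=m_{\mu}$), the inequality fails for every $n\geqslant3$. So no proof of the statement as worded can exist. Your algebra checks out:
\[
\SOG(\PP_n^f)-\SOG(C_n^f)=\frac{2m_{\mu}^2}{n(n-1)^2}\bigl(n(\sqrt5-\sqrt2)-\sqrt2\bigr)>0\qquad(n\geqslant3).
\]
The counterexample is also more robust than you state. Equality in your bound $\SOG(\Gamma)\geqslant2\sqrt2\,m_{\mu}^2/n$ holds for every connected fuzzy-regular graph. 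Examples are the uniform $C_n^f$, or the uniform $\KK_n^f$, which exists for every feasible $m_{\mu}\leqslant\binom{n}{2}$. It holds for no connected fuzzy path with $n\geqslant3$, because the pendant edge joins vertices of fuzzy degrees $\eta_1<\eta_1+\eta_2$. Hence the cycle beats every fuzzy path of the same fuzzy size, whatever the path's memberships. The paper's own Table~\ref{tab01fuzzy} agrees: its Cycle and Complete columns are exactly $2\sqrt2\,m_{\mu}^2/n$ and lie below the Path column for every listed $n$.

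The paper's proof rests on two unsupported assertions, and both are false.
\begin{itemize}
\item Case~1 claims that every connected non-path support admits a modification lowering $\SOG$ at fixed $m_{\mu}$. The concluding paragraph also asserts that every non-path support has a vertex of support degree at least $3$. $C_n$ refutes the latter. Your bound refutes the former, since fuzzy-regular graphs are global minimisers.
\item Case~2 claims that equal memberships minimise the index on a path. For $n=4$, $m_{\mu}=1$, the memberships $(0.35,0.3,0.35)$ give $\SOG\approx0.7925<0.8112$. Shrinking the middle membership to $0$ pushes the value toward $\sqrt2/2$, which is your bound, attained in the limit by a perfect matching. Convexity could not give uniformity here anyway: a path has only the reversal symmetry, not an edge-transitive one like the cycle.
\end{itemize}

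This also rules out both repairs you sketch.
\begin{itemize}
\item Restricting to fuzzy trees does not help, since the $n=4$ example above is itself a path.
\item Normalising by the maximum membership $p$ also fails. A path with one edge of membership $p$ and the rest $\varepsilon\to0$ has $\SOG\to\sqrt2\,p^2<2\sqrt5\,p^2\leqslant\SOG(\PP_n^f)$.
\end{itemize}
A true version normalises by the minimum membership $c$ of $\Gamma$. On the support $G$ one has $\mu(uv)\geqslant c$ and $\mu_u\geqslant c\,d_G(u)$, so
\[
\SOG(\Gamma)\geqslant c^2\,\SO(G)\geqslant c^2\,\SO(P_n)=\SOG(\PP_n^f).
\]
The middle step is the crisp fact~\cite{Geometric2021approach} that $P_n$ uniquely minimises the Sombor index among connected graphs of order $n$. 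Equality holds only for the path with all memberships equal to $c$.
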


\begin{proof}
Assume $\Gamma$ is a connected fuzzy graph, $\nu(v)=1$ and $\mu_u = \sum_{v \sim u} \mu(uv) > 0$ for every $u \in V(\Gamma)$. Then, we have the following cases

\case{1} In all paired fuzzy graphs with a fixed substrate support, the fuzzy Sombor index is minimised when edge memberships are dispersed as equally as feasible along a path configuration. Thus,  each connected support graph, beyond a path, has a modification that reduces $\SOG(\Gamma)$ while maintaining connectivity and total fuzzy size. Consider the edge memberships be variables $\eta_e=\mu(e) \in (0,1]$ where $\sum_{e \in E(G)} \eta_e= m_{\mu}$. Then
\begin{equation}~\label{eqq1MainThmSom2}
\SOG(\Gamma) = \sum_{e=uv \in E(G)} \eta_e \sqrt{d_u(\eta)^2 + d_v(\eta)^2},
\end{equation}
where $d_u(\eta) = \sum_{e \ni u} \eta_e$ denotes the fuzzy degree as a function of the membership vector $\eta$. 
Now, we assume that the function $f(\eta) = \eta \sqrt{x^2+y^2}$ where $x$ and $y$ linear in $\eta$ is convexity in edge membership variables given appropriate restrictions.  Therefore,  by considering any vertex in $\Gamma$ with a fuzzy degree of at least $3$. Let $v$ have three different neighbours: $u_1, u_2, u_3$. Suppose we raise the edge $vu_3$ by withdrawing the membership contribution from $vu_3$. Thus, the transferring the connection from a high-degree vertex to a lower-degree area of the graph.
 
 Suppose $\PP = w_1w_2\dots w_k$ is a pendant path attached to a vertex of maximum degree. If there exists a pendent vertex, we can transfer a small positive membership $\varepsilon$ from a pendent edge to extend the longest pendant path while keeping total $m_{\mu}$ constant. Because $f(\eta)$ grows faster when one argument is already large. Then, by moving membership away from maximum degree vertices toward chain such that structures decreases $m_\mu$.

\case{2} If the underlying graph is a path, the fuzzy Sombor index is further reduced when edge memberships are as uniform as feasible. Let the edge memberships of the fuzzy path $\PP_n^f$ be $\eta_1, \eta_2, \dots, \eta_{n-1}$ with $\sum_{i=1}^{n} \eta_i=m_{\mu}$. Then,  $\mu_{1}=\eta_1$, $\mu_n=\eta_{n-1}$, and  for a vertex $i$ where $2 \leqslant i \leqslant n-1$ satisfying $\mu_i=\eta_{i-1}+\eta_i$. Thus
\begin{equation}~\label{eqq2MainThmSom2}
\begin{aligned}
\SOG(\PP_n^f)&=\eta_1\sqrt{\eta_1^2+(\eta_1+\eta_2)^2}+\sum_{i=2}^{n-2} \eta_i \sqrt{(\eta_{i-1}+\eta_i)^2+(\eta_i+\eta_{i+1})^2}+\\
& \qquad +\eta_{n-1}\sqrt{(\eta_{n-2}+\eta_{n-1})^2+\eta_{n-1}^2}.  
\end{aligned}
\end{equation}

Thus, the relationship~\eqref{eqq2MainThmSom2} is minimized when the $\eta_i$ are as equal as possible. 
In this symmetrical equivalence, the internal fuzzy degrees deviate by no more than the endpoint adjustment, thus the degrees are as stable as the path configuration supports.
For any non-path support, the presence of a vertex with support degree $\geq 3$ forces at least one pair of incident edges to contribute a term involving a higher combined degree, increasing the value compared to the path case with the same $m_{\mu}$. Thus, the total minimum among connected fuzzy graphs is achieved uniquely (up to isomorphism) by the balanced fuzzy path $\PP_n^f$.

Therefore, according to both cases 1 and 2 and based on~\eqref{eqq1MainThmSom2} and \eqref{eqq2MainThmSom2} the relationship $\SOG(\Gamma) \geqslant \SOG(\PP_n^f)$ holds exactly when the underlying support is a path and edge memberships are set so that the fuzzy degrees of internal vertices are as equal as feasible.
\end{proof}

\begin{corollary}~\label{corfuzzy001}
Let $\Gamma$ be a connected fuzzy graph on $n \geqslant 2$ vertices. Then $\SOG_{\alpha}(\Gamma) \geqslant \SOG_{\alpha}(\PP_n^f).$
\end{corollary}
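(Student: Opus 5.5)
The natural plan is to derive Corollary~\ref{corfuzzy001} from Theorem~\ref{MainThmSom2} by an elementary power-mean argument, and to reuse the exchange strategy of its proof only where that derivation falls short.

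Fix $\alpha>1$ and write the edge terms of $\Gamma$ as $t_e=\mu(e)\sqrt{\mu_u^2+\mu_v^2}$ for $e=uv$, and the edge terms of $\PP_n^f$ as $s_1,\dots,s_{n-1}$. Then $\SOG_\alpha(\Gamma)=\sum_e t_e^\alpha$ and $\SOG_\alpha(\PP_n^f)=\sum_i s_i^\alpha$.

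\textbf{Step 1 (lower bound for $\Gamma$).} Since $x\mapsto x^\alpha$ is convex for $\alpha>1$, Jensen's inequality, or equivalently the power-mean inequality, gives
\[
\SOG_\alpha(\Gamma)\geqslant |E(\Gamma)|^{1-\alpha}\,\SOG(\Gamma)^\alpha .
\]
Here $|E(\Gamma)|$ counts the edges of positive membership.

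\textbf{Step 2 (compare edge counts).} Because $\Gamma$ is connected, $|E(\Gamma)|\geqslant n-1$. This inequality points the wrong way, since the factor $|E(\Gamma)|^{1-\alpha}$ shrinks as $|E(\Gamma)|$ grows. So Step~1 alone is insufficient unless the support of $\Gamma$ is a tree.

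\textbf{Step 3 (tree supports).} If the support is a tree, Step~1 combines with Theorem~\ref{MainThmSom2} to give
\[
\SOG_\alpha(\Gamma)\geqslant (n-1)^{1-\alpha}\SOG(\PP_n^f)^\alpha .
\]
On the balanced path the internal terms $s_i$ are equal and only the two end terms differ. The path side is therefore nearly equality in the power-mean inequality, and any remaining gap has to be handled directly.

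\textbf{Step 4 (general supports).} To avoid both difficulties, I would instead repeat the argument of Theorem~\ref{MainThmSom2} with $f_\alpha(\eta)=\bigl(\eta\sqrt{x^2+y^2}\bigr)^\alpha$ in place of $f$. First reduce to a spanning tree: deleting an edge and redistributing its membership along a path decreases every term the proof tracks, because $t\mapsto t^\alpha$ is increasing. Then use the same transfer moves, which take membership away from vertices of high fuzzy degree and push it toward chains. Each move decreases every affected summand $t_e$ or rebalances it. Since $t\mapsto t^\alpha$ is increasing and convex, a move that lowers $\sum t_e$ by making the terms more equal lowers $\sum t_e^\alpha$ even more. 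Finally, on the path, the symmetric balanced choice of the $\eta_i$ minimizes $\sum s_i^\alpha$ by the same convexity argument as in Case~2.

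\textbf{Main obstacle.} The hardest step is verifying that each transfer move in Case~1 is monotone for $\sum t_e^\alpha$ and not merely for $\sum t_e$. A move could decrease the sum while making the individual terms less equal, and then convexity would work against us. I would first try to show that every move reduces each affected term pointwise, that is, it majorizes the vector $(t_e)$ downward. If pointwise decrease fails, I would check majorization of the term vectors and apply Karamata's inequality.
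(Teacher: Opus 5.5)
Your plan cannot be completed, because the statement is false under the only reading in which it has content. That reading is the one the proof of Theorem~\ref{MainThmSom2} uses: $\Gamma$ and $\PP_n^f$ have the same order $n$ and the same fuzzy size $m_{\mu}$. Take $n=3$, $\nu\equiv 1$, $m_{\mu}=1$, and let $\Gamma=\KK_3^f$ have membership $1/3$ on each edge. Every fuzzy degree is $2/3$, and every edge term is $\frac13\sqrt{\frac49+\frac49}=\frac{2\sqrt2}{9}\approx 0.314$, so $\SOG_{\alpha}(\KK_3^f)=3(2\sqrt2/9)^{\alpha}$. A fuzzy path on three vertices with memberships $a,b$, $a+b=1$, has edge terms $g(a),g(b)$ with $g(x)=x\sqrt{x^2+1}$. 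Since $g$ is nonnegative, increasing and convex on $[0,\infty)$, so is $g^{\alpha}$ for $\alpha\geqslant 1$. Hence the best path is $a=b=\frac12$, with value $2(\sqrt5/4)^{\alpha}$, where $\sqrt5/4\approx 0.559$. The ratio of the path value to the triangle value is $\frac23(9\sqrt{10}/16)^{\alpha}\geqslant \frac23\cdot 1.778>1$. So for every $\alpha\geqslant 1$ this connected fuzzy graph beats every fuzzy path of the same order and size. The uniform cycle $C_n^f$ likewise beats the balanced path for every $n\geqslant 3$. At $\alpha=1$ the reason is the identity $\sum_{uv}\mu(uv)(\mu_u+\mu_v)=M_1(\Gamma)$. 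It gives $\SOG(\Gamma)\geqslant M_1(\Gamma)/\sqrt2\geqslant 2\sqrt2\,m_{\mu}^2/n$, with equality exactly when all fuzzy degrees coincide, which no fuzzy path on $n\geqslant 3$ vertices achieves. So Theorem~\ref{MainThmSom2} is also false. The paper derives the corollary from that theorem with no separate argument, and your Steps~1--3 rest on it, so they fail with it.

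In your own plan, the first move of Step~4 already breaks. Delete an edge of the uniform cycle $C_n^f$ and spread its membership evenly over the other $n-1$ edges: you get the balanced path, whose index is larger. So the reduction to a spanning tree goes the wrong way. Your remark in Step~2 that $|E(\Gamma)|\geqslant n-1$ ``points the wrong way'' is a symptom of this: spreading a fixed fuzzy size over more edges lowers $\SOG_{\alpha}$. The final move of Step~4 also fails, because equal $\eta_i$ are not optimal on a path once $n\geqslant 4$. For $n=4$ and $m_{\mu}=1$, the memberships $(0.4,0.2,0.4)$ give $\SOG\approx 0.747$ and $\SOG_2\approx 0.195$, against $0.811$ and $0.222$ for $(\frac13,\frac13,\frac13)$. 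No Karamata or majorization check of the transfer moves can repair this. The statement itself has to change before there is something true to prove. One option is to impose the same constant membership on $\Gamma$ and on the path, which turns it into a question about the crisp general Sombor index of the underlying graphs.
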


\begin{lemma}~\label{lemstar01}
Let $\SN_n^f$ be a fuzzy star on $n$ vertices with center fuzzy degree $\mu_p=(n-1)p$ and leaf degrees $p$. Then
\[
\SOG(\SN_n^f)=(n-1) p \sqrt{ ((n-1)p)^2+p^2}.
\]
\end{lemma}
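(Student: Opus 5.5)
The formula is a direct evaluation, so the plan is to compute the fuzzy degrees explicitly and substitute them into the defining sum~\eqref{eqq1basicsombor}. I read the statement as describing the uniform fuzzy star. The underlying crisp graph is the star $K_{1,n-1}$ with center $c$ and leaves $v_1,\dots,v_{n-1}$. Every edge $cv_i$ carries the same membership $\mu(cv_i)=p$, and every other pair has membership $0$. The fuzzy graph axiom $\mu(u,v)\leqslant\min\{\nu(u),\nu(v)\}$ then only requires $p\leqslant\nu(c)$ and $p\leqslant\nu(v_i)$. The hypotheses ``center fuzzy degree $(n-1)p$, leaf degrees $p$'' force exactly this uniform weighting: each leaf $v_i$ has a single incident edge, so $\mu_{v_i}=\mu(cv_i)=p$.

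First I will confirm the degrees from the definition $\mu_u=\sum_{v}\mu(uv)$ in the Preliminaries. We get $\mu_c=\sum_{i=1}^{n-1}\mu(cv_i)=(n-1)p$ and $\mu_{v_i}=p$. As a consistency check, Proposition~\ref{proposomb01} gives $\mu_c+\sum_i\mu_{v_i}=2(n-1)p=2m_\mu$, as it should, since $m_\mu=(n-1)p$.

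Second, I will observe that the only pairs with positive membership are the $n-1$ edges $cv_i$. In the sum~\eqref{eqq1basicsombor}, the index should be read as running over edges, as in the definition of $m_\mu$; every other pair contributes $0$. Each edge contributes
\[
\mu(cv_i)\sqrt{\mu_c^2+\mu_{v_i}^2}=p\sqrt{((n-1)p)^2+p^2},
\]
which does not depend on $i$. Summing the $n-1$ identical terms gives $(n-1)p\sqrt{((n-1)p)^2+p^2}$, which is the claimed formula.

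There is no genuine obstacle. The only point needing care is to state explicitly that the edge memberships are all equal to $p$. That is the hidden assumption making the center degree equal to $(n-1)p$. A nonuniform star with the same center degree would give a different value, because the summands $\mu(cv_i)\sqrt{\mu_c^2+\mu(cv_i)^2}$ are not linear in $\mu(cv_i)$. Optionally, I will simplify the result to $(n-1)p^2\sqrt{(n-1)^2+1}$. For $p=1$ this recovers the crisp Sombor index $(n-1)\sqrt{(n-1)^2+1}$ of the star, which serves as a sanity check.
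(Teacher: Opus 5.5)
Your proposal is correct: computing $\mu_c=(n-1)p$ and $\mu_{v_i}=p$, then summing the $n-1$ identical edge contributions $p\sqrt{((n-1)p)^2+p^2}$, is exactly the direct evaluation the paper takes for granted (the lemma is stated there without a separate proof). Your observation that the leaf-degree hypothesis forces every edge membership to equal $p$ is the right point to make explicit.
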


After establishing a lower constraint on the fuzzy Sombor index in terms of fuzzy size and minimum degree, and characterising the associated fuzzy graphs with the minimum value (specifically, adequately balanced fuzzy paths), we now shift our focus to the reverse extreme. The path configuration minimises the index by spreading connections and maintaining fuzzy degrees generally balanced, with minor contributions from $\sqrt{\mu_u^2 + \mu_v^2}$. Nevertheless, concentrating as many strong connections as feasible should maximise the index. The following theorem validates this intuition and offers a comprehensive description of maximising fuzzy graphs.
\begin{corollary}~\label{corfuzzy002}
For   a fuzzy star $\SN_n^f$ on $n\geqslant 3$ vertices with center fuzzy degree $\mu_p=(n-1)p$ and leaf degrees $p$. Then
\[
\SOG_{\alpha}(\SN_n^f)\leqslant(n-1)^{\alpha} p \sqrt{ ((n-1)^{\alpha}\,p^2+\alpha\,p^2}.
\]
\end{corollary}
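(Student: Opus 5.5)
The plan is to compute $\SOG_{\alpha}(\SN_n^f)$ exactly, in the same way as Lemma~\ref{lemstar01}, and then compare the closed form with the right-hand side.

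\textbf{Step 1: exact value.} In $\SN_n^f$ every one of the $n-1$ edges joins the centre, of fuzzy degree $(n-1)p$, to a leaf of fuzzy degree $p$, and each edge has membership $p$. So every edge contributes the same summand:
\[
\left(p\sqrt{(n-1)^2p^2+p^2}\right)^{\alpha}=p^{2\alpha}\bigl((n-1)^2+1\bigr)^{\alpha/2}.
\]
Summing over the edges gives
\[
\SOG_{\alpha}(\SN_n^f)=(n-1)\,p^{2\alpha}\bigl((n-1)^2+1\bigr)^{\alpha/2}.
\]
For $\alpha=1$ this reduces to Lemma~\ref{lemstar01}.

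\textbf{Step 2: reduction.} Reading the right-hand side as $(n-1)^{\alpha}p^2\sqrt{(n-1)^{\alpha}+\alpha}$, I would first use $0<p\le 1$ and $\alpha>1$ to get $p^{2\alpha}\le p^2$. It would then suffice to prove, with $k=n-1\ge 2$,
\[
k\,(k^2+1)^{\alpha/2}\le k^{\alpha}\sqrt{k^{\alpha}+\alpha}.
\]
I would try to prove this by taking logarithms and comparing derivatives in $\alpha$, together with Bernoulli-type estimates such as $(1+k^{-2})^{\alpha/2}\le 1+\tfrac{\alpha}{2}k^{-2}$ where they apply.

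\textbf{Main obstacle.} This reduced inequality is where I expect the argument to break, and a quick check suggests it does.
\begin{itemize}
\item At $\alpha=2$, $k=2$: the left side is $10$ and the right side is $4\sqrt6\approx 9.80$, so the inequality fails.
\item As $\alpha\to 1^{+}$: the left side tends to $k\sqrt{k^2+1}$ and the right side to $k\sqrt{k+1}$, so it fails for every $k\ge 2$.
\end{itemize}
So the uniform reduction through $p^{2\alpha}\le p^2$ cannot work.

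If the statement is to hold, the surplus factor $p^{2\alpha-2}$ must be used, which is only possible under a smallness assumption on $p$. The bound then holds exactly when
\[
p^{2\alpha-2}\le \frac{k^{\alpha-1}\sqrt{k^{\alpha}+\alpha}}{(k^2+1)^{\alpha/2}}.
\]
The first thing I would do is verify this threshold against the intended reading of the (unbalanced) parenthesis in the statement. I would then present the result either with that hypothesis on $p$ added, or replace it with the exact identity from Step 1, which is sharp.
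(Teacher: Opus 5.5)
Your Step 1 value $\SOG_{\alpha}(\SN_n^f)=(n-1)p^{2\alpha}\bigl((n-1)^2+1\bigr)^{\alpha/2}$ is correct. For the right-hand side as you read it, $(n-1)^{\alpha}p^2\sqrt{(n-1)^{\alpha}+\alpha}$, your counterexample is also valid; at $p=1$ it refutes that version itself, not just the reduction.

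The paper gives no proof of this corollary, so everything hinges on how the statement is read, and that is where the proposal goes wrong. You repaired the unbalanced parenthesis in the one way that makes the claim false. You then proposed changing the statement instead of testing the other repair.

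The corollary is evidently Lemma~\ref{lemstar01} with $(n-1)\mapsto(n-1)^{\alpha}$ and the last $p^2\mapsto\alpha p^2$. The lemma contains $((n-1)p)^2+p^2$, so the dropped parenthesis belongs after $p$. The intended right-hand side is therefore $(n-1)^{\alpha}p\sqrt{((n-1)^{\alpha}p)^2+\alpha p^2}=(n-1)^{\alpha}p^2\sqrt{(n-1)^{2\alpha}+\alpha}$.

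The $\alpha=1$ check you already did on the left side settles the reading. This right side reduces at $\alpha=1$ to exactly the value in Lemma~\ref{lemstar01}. Yours reduces to $(n-1)p^2\sqrt{n}$, which is strictly below the true value for $n\geqslant 3$. The hypothesis $n\geqslant 3$, idle in your reading, is also exactly what this reading needs: at $n=2$, $p=1$, $\alpha=2$ it fails, since $2>\sqrt{3}$.

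Under this reading your own Step 2 plan goes through with no smallness assumption on $p$. After $p^{2\alpha}\leqslant p^2$, set $k=n-1$ and $x=k^2\geqslant 4$. It suffices to show $k(k^2+1)^{\alpha/2}\leqslant k^{\alpha}\sqrt{k^{2\alpha}+\alpha}$. Squaring and dividing by $x^{\alpha}$, this becomes
\[
g(\alpha):=x^{\alpha}+\alpha-x\bigl(1+\tfrac{1}{x}\bigr)^{\alpha}\geqslant 0\qquad(\alpha\geqslant 1).
\]
Here $g(1)=0$. Using $\ln(1+\tfrac{1}{x})\leqslant\tfrac{1}{x}$,
\[
g'(\alpha)=x^{\alpha}\ln x+1-x\bigl(1+\tfrac{1}{x}\bigr)^{\alpha}\ln\bigl(1+\tfrac{1}{x}\bigr)\geqslant x^{\alpha}\ln x+1-\bigl(1+\tfrac{1}{x}\bigr)^{\alpha}>0,
\]
because $\ln x>1$ and $x>1+\tfrac{1}{x}$ for $x\geqslant 4$. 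Hence the corollary holds, strictly for $\alpha>1$, for every $p\in(0,1]$ and every $n\geqslant 3$. The exact identity from Step 1 plus this one-variable inequality is the whole proof.
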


\begin{theorem}~\label{MainThmSom3}
Among all fuzzy graphs on $n$ vertices, the fuzzy Sombor index $\SOG(\Gamma)$ is maximized when $\Gamma$ is a fuzzy complete graph $\KK_n^f$ with all edge memberships equal to some constant $p \in (0,1]$. Then
\begin{equation}~\label{eqq1MainThmSom3}
\SOG(\KK_n^f)=\binom{n}{2} p \sqrt{( (n-1)p)^2 + ((n-1)p)^2 }.
\end{equation}
\end{theorem}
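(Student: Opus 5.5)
The plan is to prove the statement as a monotonicity result: raising any edge membership never lowers $\SOG$, and the largest admissible memberships sit on the complete graph. I first pin down the intended meaning. Since $\mu(u,v)\leqslant\min\{\nu(u),\nu(v)\}\leqslant 1$, the maximum over all fuzzy graphs on $n$ vertices is the supremum of the function
\[
F(\eta)=\sum_{uv}\eta_{uv}\sqrt{\mu_u(\eta)^2+\mu_v(\eta)^2},\qquad \mu_u(\eta)=\sum_{w\neq u}\eta_{uw},
\]
over the box $\eta\in[0,1]^{\binom{n}{2}}$. Here the sum runs over all unordered pairs, with $\eta_{uv}=0$ for non-edges, and $\nu$ is taken large enough, e.g.\ $\nu\equiv 1$, as in the proof of Theorem~\ref{MainThmSom2}.

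\textbf{Step 1: $F$ is coordinatewise nondecreasing.} Fix a pair $xy$ and increase $\eta_{xy}$. The only quantities that change are $\mu_x$ and $\mu_y$, and both increase. Every term of $F$ is a product of a nonnegative factor $\eta_e$ with $\sqrt{\mu_u^2+\mu_v^2}$, which is nondecreasing in each degree. The term $e=xy$ also gains through its own factor $\eta_{xy}$. Hence $\partial F/\partial\eta_{xy}\geqslant\sqrt{\mu_x^2+\mu_y^2}\geqslant 0$, with strict inequality whenever $\mu_x+\mu_y>0$.

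\textbf{Step 2: the maximum is at $\eta\equiv 1$.} By Step 1, moving each coordinate up to its upper bound one at a time never decreases $F$. So the maximum over the box is attained at $\eta\equiv 1$, which is the support $K_n$ with all memberships equal to one. Strict positivity of the partial derivatives gives uniqueness: at any other point some coordinate can still be increased with strict gain.

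\textbf{Step 3: the uniform family and the value.} On the uniform family $\eta\equiv p$, every vertex has fuzzy degree $(n-1)p$ (Proposition~\ref{proposomb01} provides a consistency check). Each of the $\binom n2$ terms is therefore $p\sqrt{((n-1)p)^2+((n-1)p)^2}$, which gives \eqref{eqq1MainThmSom3}. This expression equals $\binom n2\sqrt2(n-1)p^2$, which is increasing in $p$ and is maximized at $p=1$, in agreement with Step 2. As a further consistency check against Theorem~\ref{Thmper001}: at $p=1$ the value is $\sqrt2\,m(n-1)$ with $m=\binom n2$, so the bound of Theorem~\ref{Thmper001} is attained and the extremal graph is sharp.

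\textbf{Main obstacle.} The delicate point is the phrasing ``with all edge memberships equal to some constant $p$''. For a fixed $p<1$ the uniform $\KK_n^f$ is not a maximizer over all fuzzy graphs, because Step 1 shows it is dominated by raising memberships. I will therefore state the result in either of two ways. The first is as the global maximum with $p=1$, equivalently $p=\min\nu$ when $\nu$ is constant. The second is as the maximum among fuzzy graphs whose memberships are bounded by $p$, which is the same monotonicity argument on the box $[0,p]^{\binom n2}$. I expect no difficulty in the monotonicity computation itself; the care goes into making this formulation precise.
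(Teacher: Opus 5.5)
Your argument is correct, and it takes a more complete route than the paper's. The paper's proof has three cases:
\begin{itemize}
\item Case~2 adds a missing edge $uv$ with small membership and writes $\Delta=\SOG(\Gamma')-\SOG(\Gamma)$ as a sum of positive terms, so a maximizer must have support $\KK_n$.
\item Case~1 asserts, without argument, that for a fixed support the index is maximized when all memberships are equal.
\item Case~3 computes the value.
\end{itemize}
Your Step~1 is a single coordinatewise monotonicity lemma, and the paper's Case~2 is its special case of a coordinate raised from $0$. Because your lemma also covers raising an existing membership, it supplies the argument Case~1 lacks. It also shows what that case really amounts to: every membership sits at its cap. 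Equality of memberships is not the mechanism. With a fixed vertex membership $\nu$, your argument gives the maximizer $\mu(uv)=\min\{\nu(u),\nu(v)\}$, which need not be uniform. With the total size fixed, equal memberships can even minimize. On a path $xyz$ with $\mu(xy)=a$, $\mu(yz)=b$ and $a+b=s$, the index $a\sqrt{a^2+s^2}+b\sqrt{b^2+s^2}$ is convex in $a$ and smallest at $a=b$.

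Your remark on the formulation is also correct, and the paper does not address it; it ends by calling the uniform $\KK_n^f$ the unique maximizer for any $p$. For $p<1$ that graph is strictly beaten by the all-ones complete graph. So the statement holds only with $p=1$, or as the maximum over memberships in $[0,p]$. Your box argument proves exactly this, together with uniqueness.

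One small point of rigor. At points such as $\eta\equiv 0$ the partial derivatives vanish, so strictness should be argued along the segment instead. For $t>\eta_{xy}$ one has $\mu_x\geqslant t>0$, so the gain is strict. Using monotonicity of $\sqrt{a^2+b^2}$ in each nonnegative argument also avoids differentiating at the origin.
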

\begin{proof}
Let $G$ be the underlying crisp graph of $\Gamma$, for any fixed underlying graph $G$, $\SOG(\Gamma)$ is maximized when all existing edge memberships are equal.
Consider the edge membership values $\{\mu(e)\}_{e \in E(G)}$ as variables satisfying $0 < \mu(e) \leqslant 1$.
\case{1} Suppose the edge memberships are not uniform. Then, there exist two edges $e_1=u_1v_1$ and $e_2=u_2v_2$ with $\mu(e_1) \neq \mu(e_2)$.  
Thus, for any fixed support $G$, $\SOG(\Gamma)$ is maximized when all positive edge memberships are equal.

\case{2} suppose $\Gamma$ is not complete.
Let $p> 0$ be small enough. Define a new fuzzy graph $\Gamma'$ where $\mu'(uv) = c$ and remain all other memberships unchanged. Then, $\mu'_u=\mu_u+p$ and $\mu'_v=\mu_v+p$ where  $\mu'_w=\mu_w$ for all other vertices $w \neq u,v$. Hence $\Delta = \SOG(\Gamma') - \SOG(\Gamma)$. Thus, 
\[
\begin{aligned}
\Delta&=p \sqrt{(\mu_u+p)^2+(\mu_v+p)^2} 
+\sum_{w \sim u} \mu(uw) \left[ \sqrt{(\mu_u+p)^2+\mu_w^2}-\sqrt{\mu_u^2+\mu_w^2} \right]+\\
&\qquad+\sum_{x \sim v} \mu(xv) \left[ \sqrt{(\mu_v+p)^2+\mu_x^2}-\sqrt{\mu_v^2+\mu_x^2} \right].
\end{aligned}
\]
By utilize the inequality $\sqrt{(a + c)^2 + b^2} - \sqrt{a^2 + b^2} > 0$ for $c > 0$. Then, the term of $p$ established that $p \sqrt{(\mu_u+p)^2+(\mu_v+p)^2}>0$. Since all terms are positive, $\Delta > 0$. It follows that $\SOG(\Gamma)$ is maximized only when the underlying graph is the complete graph $\KK_n$.

\case{3} If the underlying graph is $\KK_n$ and all edge memberships are equal to some constant $p \in (0,1]$, every vertex has fuzzy degree $\mu_u=(n-1)p$. Then, according to Lemma~\ref{lemstar01}, 
\begin{equation}~\label{eqq2MainThmSom3}
\SOG(\KK_n^f) = \sum_{uv \in E(\KK_n)} p \sqrt{((n-1)p)^2+((n-1)p)^2}
\end{equation}
Thus, from~\eqref{eqq2MainThmSom3} we obtain 
\begin{equation}~\label{eqq3MainThmSom3}
\SOG(\KK_n^f)=\binom{n}{2} \, p \cdot (n-1)p\sqrt{2}.
\end{equation}
Then, based on~\eqref{eqq3MainThmSom3}
\[
\SOG(\KK_n^f)=\frac{\sqrt{2}}{2} n (n-1)^2 p^2
\]
which the relationship~\eqref{eqq1MainThmSom3} holds.
\end{proof}
Since any non-uniform distribution or incomplete underlying graph yields a strictly smaller value, the uniform fuzzy complete graph $\KK_n^f$ is the unique maximizer.

\begin{corollary}~\label{corfuzzy003}
Among all a fuzzy complete graph $\KK_n^f$ with all edge memberships equal to some constant $p \in (0,1]$, 
\begin{equation}~\label{eqq0corfuz01}
\frac{\sqrt{2} \, m_{\mu}^2}{n-1} \leqslant \SOG(\Gamma) \leqslant \frac{\sqrt{2}}{2} n(n-1)^2 p_{\max}^2,
\end{equation}
where $p_{\max}$ is the maximum edge membership value in $\Gamma$. 
\end{corollary}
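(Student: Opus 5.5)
Both inequalities in \eqref{eqq0corfuz01} can be proved for an arbitrary fuzzy graph $\Gamma$ on $n\geqslant 2$ vertices, not only for the uniform $\KK_n^f$. The plan is to bound each edge term $\mu(uv)\sqrt{\mu_u^2+\mu_v^2}$ from above and below by elementary estimates, and then sum over edges. The case $n=1$ is trivial, since then $m_\mu=0$.

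\emph{Upper bound.} Let $p_{\max}=\max_{uv}\mu(uv)$. Every vertex has at most $n-1$ neighbours, so $\mu_u\leqslant (n-1)p_{\max}$ for all $u$. Each edge term therefore satisfies
\[
\mu(uv)\sqrt{\mu_u^2+\mu_v^2}\leqslant p_{\max}\sqrt{2}\,(n-1)p_{\max}.
\]
There are at most $\binom{n}{2}$ edges, so summing gives $\SOG(\Gamma)\leqslant \binom{n}{2}\sqrt{2}(n-1)p_{\max}^2=\frac{\sqrt2}{2}n(n-1)^2p_{\max}^2$. This is exactly the value computed in Theorem~\ref{MainThmSom3}, so equality holds for the uniform $\KK_n^f$ with $p=p_{\max}$.

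\emph{Lower bound.} Apply the quadratic-mean inequality $\sqrt{a^2+b^2}\geqslant (a+b)/\sqrt2$ to each edge, which gives
\[
\SOG(\Gamma)\geqslant \tfrac{1}{\sqrt2}\sum_{uv}\mu(uv)(\mu_u+\mu_v).
\]
Regroup the right-hand sum by vertices. The term $\mu_u$ is collected once from every edge at $u$, and the weights $\mu(uv)$ of those edges sum to $\mu_u$. Hence
\[
\sum_{uv}\mu(uv)(\mu_u+\mu_v)=\sum_u \mu_u^2=M_1(\Gamma),
\]
the fuzzy first Zagreb index of Definition~\ref{ZagrebIndices001}. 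Next, Cauchy--Schwarz together with Proposition~\ref{proposomb01} gives $M_1(\Gamma)\geqslant \frac1n\bigl(\sum_u\mu_u\bigr)^2=\frac{4m_\mu^2}{n}$. Combining the two steps yields $\SOG(\Gamma)\geqslant \frac{2\sqrt2\,m_\mu^2}{n}$. Finally, $\frac{2}{n}\geqslant\frac{1}{n-1}$ is equivalent to $n\geqslant 2$, which gives the claimed bound $\frac{\sqrt2\,m_\mu^2}{n-1}$.

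The main obstacle is the regrouping identity $\sum_{uv}\mu(uv)(\mu_u+\mu_v)=M_1(\Gamma)$. It is elementary, but it relies on the paper's convention that $\mu_u$ sums $\mu(uv)$ over all $v$ with $\mu(uv)>0$, so the bookkeeping of each edge contributing to both endpoints has to be stated carefully. The remaining steps are routine. The lower bound is not tight for $\KK_n^f$, and the plan does not aim to characterise its equality case.
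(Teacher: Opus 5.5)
Your proof is correct, and it proves more than is asked: both inequalities hold for every fuzzy graph on $n\geqslant 2$ vertices, not only for the uniform $\KK_n^f$. Your upper bound is essentially the paper's argument: bound $\sqrt{\mu_u^2+\mu_v^2}$ by $\sqrt2$ times the largest fuzzy degree, which is at most $(n-1)p_{\max}$, and count at most $\binom{n}{2}$ edges.

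The two routes differ in the lower bound. The paper stays on the complete support and asserts the \emph{unweighted} estimate $\sum_{uv}\sqrt{\mu_u^2+\mu_v^2}\geqslant\sqrt2\,(n-1)\,m_\mu$. This holds there because each vertex lies on exactly $n-1$ edges. The paper then has to reinsert the common membership, and this should be a multiplication by $p=2m_\mu/(n(n-1))$; the factor $1/\mu(u,v)$ written in the paper is a slip. That yields $2\sqrt2\,m_\mu^2/n$. You instead keep the weight $\mu(uv)$ inside the sum, so the regrouping identity produces $M_1(\Gamma)$, and Cauchy--Schwarz replaces both the completeness and the uniformity hypotheses. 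The paper's route is therefore confined to the uniform complete graph. There $\SOG$ is already known exactly from Theorem~\ref{MainThmSom3}, and both bounds reduce to substitution. Your route gives a genuinely general inequality.

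Two small corrections to your closing remarks. First, your intermediate bound $\SOG(\Gamma)\geqslant 2\sqrt2\,m_\mu^2/n$ is sharp. Equality holds exactly when all fuzzy degrees coincide, in particular for the uniform $\KK_n^f$, where $\SOG=\frac{\sqrt2}{2}n(n-1)^2p^2=2\sqrt2\,m_\mu^2/n$. Only the final step $2/n\geqslant 1/(n-1)$ loses anything, and at $n=2$ it loses nothing. Second, for $n=1$ the left-hand side is $0/0$, so that case is not trivial but excluded: the statement implicitly requires $n\geqslant 2$.
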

\begin{proof}
Assume $d_i=\mu_{v_i}$ be the fuzzy degrees, where $m_{\mu} = \frac{1}{2} \sum d_i$ for a fuzzy complete graph $\KK_n^f$. Then, 
\begin{equation}~\label{eqq1corfuz01}
\sum_{uv\in E(\Gamma)} \sqrt{d(u)^2+d(v)^2} \geqslant \sqrt{2} \cdot (n-1) m_{\mu},
\end{equation}
Thus, from~\eqref{eqq1corfuz01} we obtain 
\[
\SOG(\Gamma)\geqslant  \frac{1}{\mu(u,v)}\left(\sqrt{2} \cdot (n-1) m_{\mu}\right).
\]
Hence, for $m_{\mu}$ in $\KK_n^f$ the minimum occurs when the memberships are concentrated on as few edges as possible.

Since $\sqrt{d(u)^2+d(v)^2} \leqslant \sqrt{2} \max(d(u),d(v)) \leqslant \sqrt{2} \cdot \max d_i$, it follows that
\begin{equation}~\label{eqq2corfuz01}
  \SOG(\Gamma) \leqslant \binom{n}{2} \cdot p \cdot (n-1)p \sqrt{2},  
\end{equation}
with $p = 2m_{\mu}/[n(n-1)]$. Thus, from~\eqref{eqq1corfuz01} and \eqref{eqq2corfuz01} the relationship~\eqref{eqq0corfuz01} holds.
\end{proof}

After characterising the extremal fuzzy graphs for the fuzzy Sombor index ($\alpha=1$), we now examine its generalised form. The powering function $f \mapsto f^{\alpha}$ with $\alpha>1$ is strictly growing and convex on $[0,\infty)$, therefore the graph maximising the individual terms should also maximise the powered sum. The following result validates this expectation, demonstrating that the extremal graph continues to be the same as in the conventional situation.

\begin{theorem}~\label{MainThmSom4}
For $\alpha > 1$, among fuzzy graphs on $n$ vertices. Then, according to Theorem~\ref{MainThmSom3}, $\SOG_{\alpha}(\Gamma)$ maximum on $\KK_n^f$,
\begin{equation}~\label{eqq1MainThmSom4}
 \SOG_{\alpha}(\KK_n^f)=\binom{n}{2}\left(c \cdot (n-1)c \sqrt{2} \right)^{\alpha}.   
\end{equation}
\end{theorem}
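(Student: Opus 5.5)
The plan is to lift the argument of Theorem~\ref{MainThmSom3} from $\alpha=1$ to general $\alpha>1$, working term by term with the same monotone modifications. I fix the vertex memberships and assume, as in the proof of Theorem~\ref{MainThmSom3}, that every edge membership satisfies $0<\mu(uv)\leqslant c$ with $c\in(0,1]$ the admissible maximum (for instance $c=\min_w\nu(w)$). Write $t_{uv}=\mu(uv)\sqrt{\mu_u^2+\mu_v^2}$, so that $\SOG_{\alpha}(\Gamma)=\sum_{uv}t_{uv}^{\alpha}$. The map $x\mapsto x^{\alpha}$ is strictly increasing on $[0,\infty)$. Hence any change that does not decrease any $t_{uv}$ and strictly increases at least one of them, or adds a new edge with $t>0$, strictly increases $\SOG_{\alpha}$. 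I will use only monotonicity here; convexity is not needed for this direction.

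\textbf{Step 1 (completion).} Suppose the support of $\Gamma$ is not complete, and pick a non-adjacent pair $u,v$. Give it a membership $p\in(0,c]$. As computed in Case~2 of Theorem~\ref{MainThmSom3}, $\mu_u$ and $\mu_v$ increase by $p$ and all other fuzzy degrees are unchanged. Each existing $t_{uw}$ and $t_{vx}$ is an increasing function of the degrees, so it does not decrease, and the new edge contributes $t_{uv}^{\alpha}>0$. So $\SOG_{\alpha}$ strictly increases, and a maximiser must have support $\KK_n$.

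\textbf{Step 2 (raising memberships).} On support $\KK_n$, every $t_{uv}$ is coordinatewise nondecreasing in the full membership vector $(\mu(e))_e$. The reason is that $\mu(uv)$ appears as a factor, and every other membership enters only through the degrees under the square root. Raising every $\mu(e)$ to the cap $c$ therefore weakly increases every $t_{uv}$, and it strictly increases some $t_{uv}$ unless all memberships already equal $c$. This is the step that replaces Case~1 of Theorem~\ref{MainThmSom3}: I argue ``raise everything to the cap'' rather than ``equalise'', because monotonicity makes that immediate. The maximiser is thus the uniform graph $\KK_n^f$ with all memberships equal to $c$.

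\textbf{Step 3 (evaluation).} In $\KK_n^f$ every vertex has fuzzy degree $(n-1)c$. Each of the $\binom{n}{2}$ edges then gives $t=c\sqrt{2}(n-1)c$, which yields \eqref{eqq1MainThmSom4}.

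The main obstacle is the normalisation. Without a cap $c$ the supremum is not taken over a well-posed family, and under a fixed-fuzzy-size constraint Step~2 fails. So I must state the constraint $\mu\leqslant c$ explicitly, reading $c$ as the constant $p$ of Theorem~\ref{MainThmSom3}. I must also check that the vertex-membership condition $\mu(uv)\leqslant\min\{\nu(u),\nu(v)\}$ allows every edge to reach $c$. If it does not, I would replace $c$ by edge-dependent caps and get only an upper bound rather than equality.
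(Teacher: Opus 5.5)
Under your reading of $c$ as a uniform cap on edge memberships, your proof is correct, but it differs from the paper's at the key step. The paper does not argue from scratch. It imports Theorem~\ref{MainThmSom3} (the case $\alpha=1$) and transfers the completion step to $\SOG_{\alpha}$ because $x\mapsto x^{\alpha}$ is increasing. It then handles unequal memberships on $\KK_n$ by strict convexity, via Jensen's inequality $\bigl(\frac{1}{N}\sum\lambda_{uv}\bigr)^{\alpha}<\frac{1}{N}\sum\lambda_{uv}^{\alpha}$ with $N=\binom{n}{2}$. You use only coordinatewise monotonicity of each term in the full membership vector, and raise every membership to the cap.

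Your route gains three things.
\begin{enumerate}
\item It is self-contained. The paper's argument rests on Case~1 of Theorem~\ref{MainThmSom3} (equal memberships are optimal on a fixed support), which is asserted there without proof.
\item It makes the completion step rigorous. The paper's inference ``$\SOG$ increases and $x^{\alpha}$ is increasing, hence $\SOG_{\alpha}$ increases'' is valid only because no individual term decreases, and that is exactly what you check.
\item It avoids the convexity step, which points the wrong way. For a fixed value of $\sum\lambda_{uv}$, Jensen shows that equal $\lambda_{uv}$ \emph{minimise} $\sum\lambda_{uv}^{\alpha}$, so the paper's Case~2 cannot yield a maximum. Your remark that convexity is not needed is the right correction.
\end{enumerate}

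Your explicit cap $c\leqslant\min_w\nu(w)$ is also what gives the otherwise free constant $c$ in the formula a meaning; under the paper's own assumption $\nu\equiv 1$ it forces $c=1$. Your caution about fixed fuzzy size is justified too. For $n\geqslant 3$ and $m_{\mu}\leqslant 1$, placing all of $m_{\mu}$ on a single edge gives $(\sqrt{2}\,m_{\mu}^{2})^{\alpha}$. This exceeds the uniform $\KK_n^f$ value $N^{1-\alpha}\bigl(2\sqrt{2}\,m_{\mu}^{2}/n\bigr)^{\alpha}$, so the statement fails under that constraint.
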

\begin{proof}
Assume that $\nu(v)=1$ for all $v \in V(\Gamma)$. For each pair $u,v$, consider
\[
\lambda_{uv} =
\begin{cases}
\mu(uv)\sqrt{\mu_u^2 + \mu_v^2}, & \text{if } \mu(uv)>0,\\
0, & \text{otherwise}.
\end{cases}
\]
Then, it follows that 
\[
\SOG(\Gamma) = \sum_{u<v} \lambda_{uv}, \qquad
\SOG_{\alpha}(\Gamma) = \sum_{u<v} \lambda_{uv}^{\alpha}.
\]
If $\alpha=1$, according to Theorem~\ref{MainThmSom3}, we find that $\SOG(\Gamma)$ is maximized if and only if $\Gamma\cong \KK_n^f$ with constant edge membership $c$. Thus,  for all positive terms $\lambda_{uv}$ are equal, and a direct computation yields
\begin{equation}~\label{eqq2MainThmSom4}
\lambda_{uv} = c \sqrt{((n-1)c)^2 + ((n-1)c)^2}.
\end{equation}
In this case, if $u \neq v$. Then, from~\eqref{eqq2MainThmSom4} follows that $\lambda = \sqrt{2}(n-1)c^2$.

If $\alpha > 1$, the function $f(\eta)=\eta^{\alpha}$ is strictly increasing and strictly convex on $[0,\infty)$. Thus, if $\Gamma\cong \KK_n^f$, it follows that $\SOG_{\alpha}(\Gamma)$ is maximized uniquely at $\KK_n^f$ with uniform membership. Then, we will discuss the following cases:

\case{1} Assume that $\Gamma$ is not complete. Then, according to Theorem~\ref{MainThmSom3}, the positive membership strictly increases $\SOG(\Gamma)$. Since $f(\eta)$ is strictly increasing, this operation also strictly increases $\SOG_{\alpha}(\Gamma)$. Hence, any maximizer must be complete.

\case{2} If $\Gamma\cong \KK_n$ but the edge memberships are not all equal. Then the values $\lambda_{uv}$ are not constant. Since $f(\eta)$ is strictly convex. Then, 
\begin{equation}~\label{eqq3MainThmSom4}
\left( \frac{1}{\binom{n}{2}} \sum_{u<v} \lambda_{uv} \right)^{\alpha}<\frac{1}{\binom{n}{2}} \sum_{u<v} \lambda_{uv}^{\alpha}. 
\end{equation}
Thus, for a fixed value of $\lambda_{uv}$, according to~\eqref{eqq3MainThmSom4} the sum of $\lambda_{uv}^{\alpha}$ is maximized when all $\lambda_{uv}$ are equal.
Therefore, we conclude that $\SOG_{\alpha}(\Gamma)$ is maximized uniquely when $\Gamma\cong  \KK_n^f$ with constant membership $c$. Thus, by considering $\lambda=\sqrt{2}(n-1)c^2$ when $u\neq v$, it gives that the relationship~\eqref{eqq1MainThmSom4} holds.
\end{proof}

The maximising fuzzy graph's independence from the value of $\alpha > 1$ is a positive aspect of the generalised fuzzy Sombor index. This occurrence often occurs when a nonnegative quantity is treated to a convex transformation.
\section{Fuzzy Sombor Index on Fuzzy Trees and Unicyclic Fuzzy Graphs}

Trees and unicyclic graphs have been widely investigated in classical Sombor index theory, with the path $\PP_n$ and the star $\SN_n$ frequently appearing as the extremal structures for the minimum and maximum values, respectively. It is therefore natural to study if similar results hold in the fuzzy setting, where both the underlying topology and edge membership values might fluctuate continually.

This section describes the fuzzy trees and unicyclic graphs that extremize the fuzzy Sombor index $\SOG(\Gamma)$ under two constraints: fixed order $n$ and fixed fuzzy size $m_{\mu}$.

We begin with some fuzzy trees. A fuzzy tree is a connected fuzzy graph with an underlying crisp graph that is a tree (connected and acyclic).
Let $\TG$ be a fuzzy tree on $n \geq 2$ vertices, among Theorem~\ref{Thmfuzzytree01} we consider $\nu(v) = 1$ for all $v\in V(\TG)$ and fixed fuzzy size $m_{\mu} > 0$.
\begin{theorem}~\label{Thmfuzzytree01}
 The fuzzy Sombor index on $\TG$ satisfies
 \begin{equation}~\label{eqq1Thmfuzzytree01}
\SOG(\PP_n^f) \leqslant \SOG(\TG) \leqslant \SOG(\SN_n^f),
 \end{equation}
the lower bound is attained if and only if $\TG\cong \PP_n^f$ and the upper bound is attained if and only if $\TG\cong \SN_n^f$. 
\end{theorem}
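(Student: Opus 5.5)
The plan is to interpret the fuzzy path $\PP_n^f$ and the fuzzy star $\SN_n^f$ in the statement as in Lemma~\ref{lemstar01} and Theorem~\ref{MainThmSom2}. Both carry uniform edge membership $p$, which under the normalisation of fixed fuzzy size is $p=m_{\mu}/(n-1)$. I would compare $\TG$ with them in the same uniform regime, so the first step is to reduce to the crisp Sombor index. If every edge of a tree $T$ has membership $p$, then $\mu_u=p\,d_T(u)$ for every vertex. Each term then factors as
\[
\mu(uv)\sqrt{\mu_u^2+\mu_v^2}=p^2\sqrt{d_T(u)^2+d_T(v)^2},
\]
hence $\SOG(\TG)=p^2\,\SO(T)$ with $p=m_{\mu}/(n-1)$ depending only on $n$ and $m_{\mu}$. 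This identity also reproduces Lemma~\ref{lemstar01} for the star and the balanced path of Theorem~\ref{MainThmSom2}, as a consistency check.

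The second step uses the classical extremal result for trees cited in the introduction (Cruz, Gutman and Rada~\cite{Cruz2021Gutman}, see also~\cite{cruz2020extremal}): $\SO(\PP_n)\le \SO(T)\le \SO(\SN_n)$ for every tree $T$ on $n$ vertices, with equality exactly for the path and the star. For completeness I would sketch both halves, since each is a short transformation argument. For the upper bound, take a vertex $w$ of maximum degree and a neighbour $u$ of $w$ that is not a leaf. Re-attach the other neighbours of $u$ to $w$. The degree of $w$ strictly increases and no $\sqrt{x^2+y^2}$ term decreases, so $\SO$ strictly increases, and iterating ends at the star. For the lower bound, use the identity $\SO(T)=\sum_{uv}\sqrt{d_u^2+d_v^2}\ge \sum_{uv}\tfrac{d_u+d_v}{\sqrt2}$. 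Alternatively, take a branching vertex of degree $\ge 3$ and move one of its branches to the end of a pendant path. This strictly lowers $\SO$ because $t\mapsto\sqrt{t^2+c^2}$ is increasing, and iterating ends at the path. Multiplying by $p^2>0$ gives \eqref{eqq1Thmfuzzytree01}, and the equality cases transfer verbatim because the factorisation is exact.

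The main obstacle is the non-uniform case, where the memberships $\mu(e)$ vary subject only to $\sum_e\mu(e)=m_{\mu}$ and $0<\mu(e)\le 1$. There the statement cannot hold with $\SN_n^f$ uniform. Concentrating almost all of $m_{\mu}$ on one edge of a star gives a term close to $\sqrt2\,m_{\mu}^2$. This exceeds $m_{\mu}\sqrt{m_{\mu}^2+p^2}$ whenever the bound $\mu(e)\le1$ permits it, i.e.\ when $m_{\mu}\le 1$. So in the write-up I would state explicitly that both $\TG$ and the comparison graphs carry the common membership $p=m_{\mu}/(n-1)$, which is the setting of Lemma~\ref{lemstar01}. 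I would not try to extend the result by a convexity argument in the spirit of Theorem~\ref{MainThmSom3}, since that argument does not control the direction of the inequality under a fixed total $m_{\mu}$.
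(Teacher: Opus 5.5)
Your argument is correct for the reading you adopt, and it takes a genuinely different route from the paper. The paper leaves the memberships free subject only to a fixed $m_{\mu}$. For the lower bound it simply invokes Theorem~\ref{MainThmSom2}. For the upper bound it shifts a small membership $\varepsilon$ toward a central vertex, appeals to majorisation of the star degree sequence, and concludes that the maximiser is the star with uniform membership $c=m_{\mu}/(n-1)$.

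You instead put the common membership $p=m_{\mu}/(n-1)$ on every tree. Then $\SOG(\TG)=p^2\,\SO(T)$ holds exactly, and \eqref{eqq1Thmfuzzytree01}, with both equality cases, is the crisp tree theorem multiplied by $p^2>0$. What your route buys is an actual proof.

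The paper's route aims at the free-membership statement, and your counterexample correctly shows that this reading is false. It fails at both ends:
\begin{itemize}
\item On a star the centre has fuzzy degree $m_{\mu}$ for every distribution, and $\eta\mapsto\eta\sqrt{m_{\mu}^2+\eta^2}$ is strictly convex. By Jensen, the uniform star is therefore the \emph{minimiser} among fuzzy stars of size $m_{\mu}$, the opposite of the paper's conclusion.
\item Take $\PP_4$ with memberships $(a,\varepsilon,a)$ and $2a+\varepsilon=m_{\mu}$. Then $\SOG\to m_{\mu}^2/\sqrt2\approx 0.707\,m_{\mu}^2$ as $\varepsilon\to 0$. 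This lies below $\SOG(\PP_4^f)=\frac{2\sqrt5+2\sqrt2}{9}\,m_{\mu}^2\approx 0.811\,m_{\mu}^2$, so the appeal to Theorem~\ref{MainThmSom2} cannot rescue the lower bound either.
\end{itemize}
Some restriction of this kind is therefore necessary, not merely convenient, and stating the uniform-membership hypothesis explicitly, as you plan to, is the natural fix.

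One repair is needed in your sketch of the crisp lower bound. It is harmless, since you rely on the classical theorem, but two steps do not work as written.
\begin{itemize}
\item The bound $\SO(T)\geqslant\sum_{uv}(d_u+d_v)/\sqrt2=M_1(T)/\sqrt2$ does not reach $\SO(\PP_n)$. For $n\geqslant 3$, $M_1(\PP_n)/\sqrt2=\sqrt2\,(2n-3)$, which is strictly less than $2\sqrt5+2\sqrt2\,(n-3)=\SO(\PP_n)$.
\item Moving an arbitrary branch at $v$ to the end of an arbitrary pendant path is not justified by monotonicity alone. The edge at the receiving leaf rises from $\sqrt{1+d_y^2}$ to $\sqrt{4+d_y^2}$. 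Meanwhile the decreases at $v$ become arbitrarily small when the neighbours of $v$ have large degree, so the net change can be positive.
\end{itemize}
Instead, choose a vertex $v$ of degree at least $3$ farthest from a fixed leaf. At least two of its branches are then pendant paths, and you move one of them to the end of another. The moved edge drops by at least $\sqrt{13}-\sqrt8$, while the only term that can increase rises by at most $\sqrt8-\sqrt5$, so $\SO$ strictly decreases.
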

\begin{proof}
By considering results in Lemma~\ref{lemstar01} and Theorem~\ref{MainThmSom2}. Assume that $\nu(v)=1$ for vertex $v\in V(\TG)$.

\medskip 

  {\bf Lower bound.} Given that every fuzzy tree is a cross-connected fuzzy graph, the fuzzy path will always be the minimiser within the fuzzy tree class. Then, if $\TG\cong \PP_n^f$, $\SOG(\PP_n^f) \leqslant \SOG(\TG)$. 

\medskip

  {\bf Upper bound.} Assume the edge $e=xy$ where both $x$ and $y$ have a fuzzy degree of at least $2$. Identify a leaf $u$ attached to $x$ and another leaf $w$ attached to $y$. By shipping a small positive membership $\varepsilon$ from the edge $xy$ to a new connection that focuses the weight at one central vertex, the term that includes a high aggregated degrees will be substituted by terms associated with a one vertex with considerably greater fuzzy degree.

Suppose that $\Delta(\TG)$ be the current maximum fuzzy degree. Thus,  for $\SN_n^f$ the center has fuzzy degree $(n-1)c=m_{\mu}$, and the pendent vertex have degree $c=m_{\mu}/(n-1)$. Then, 
\begin{equation}~\label{eqq2Thmfuzzytree01}
 c \sqrt{(m_{\mu})^2 + c^2} = c \sqrt{m_{\mu}^2 + \left(\frac{m_{\mu}}{n-1}\right)^2}.   
\end{equation}

This relationship~\eqref{eqq2Thmfuzzytree01} is maximized by the star among all non-star trees where concentrating the total fuzzy size $m_\mu$ on a single biggest degree vertex increases the sum more than spreading it across several vertices. Equivalently, the star degree sequence $(n-1,1,\dots,1)$ majorizes every other tree degree sequence. Thus,  $\SOG(\TG) \leqslant \SOG(\SN_n^f)$. Then, if $\TG\cong \SN_n^f$, equality~\eqref{eqq1Thmfuzzytree01} holds precisely for the fuzzy star, and then the maximum is attained when all $n-1$ star edges have equal membership $c=m_\mu/(n-1)$.
\end{proof}

A \emph{fuzzy unicyclic graph} is a connected fuzzy graph whose underlying crisp graph contains exactly one cycle.

\begin{theorem}~\label{Thmfuzzytree02}
Among all fuzzy unicyclic graphs on $n \geq 3$ vertices with fixed fuzzy size $m_{\mu}$, the fuzzy Sombor index $\SOG(\Gamma)$ is
\begin{itemize}
    \item maximized when the underlying graph is a cycle with a star; 
    \item minimized when the underlying graph is the cycle $C_n$, with edge memberships distributed precisely.
\end{itemize}
\end{theorem}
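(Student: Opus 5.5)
The plan is to handle the two bullets separately. The minimum has a clean argument that works for every fuzzy graph. The maximum needs a support-transformation argument modelled on the upper bound in Theorem~\ref{Thmfuzzytree01}.

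\textbf{Minimum.} We use the pointwise inequality $\sqrt{a^2+b^2}\geqslant (a+b)/\sqrt{2}$, valid for $a,b\geqslant 0$, with equality iff $a=b$. This gives
\[
\SOG(\Gamma)\geqslant \frac{1}{\sqrt{2}}\sum_{uv\in E(\Gamma)}\mu(uv)(\mu_u+\mu_v)=\frac{1}{\sqrt{2}}\sum_{u\in V}\mu_u\sum_{v\sim u}\mu(uv)=\frac{1}{\sqrt{2}}M_1(\Gamma),
\]
with $M_1$ as in Definition~\ref{ZagrebIndices001}. By Cauchy--Schwarz and Proposition~\ref{proposomb01}, $M_1(\Gamma)\geqslant (\sum_u\mu_u)^2/n=4m_\mu^2/n$. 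Hence $\SOG(\Gamma)\geqslant 2\sqrt{2}\,m_\mu^2/n$.

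On $C_n$ with uniform membership $p=m_\mu/n$, every fuzzy degree equals $2p$. The index is then $n\cdot p\cdot 2\sqrt{2}p=2\sqrt{2}m_\mu^2/n$, so the bound is attained.

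For the equality case, equality forces all fuzzy degrees to be equal. If the unicyclic support had a pendant vertex $x$ with neighbour $y$, then $\mu_x=\mu(xy)<\mu_y$, because $y$ has another edge of positive membership (the graph is connected and $n\geqslant 3$). That is a contradiction, so the support is $C_n$. On $C_n$, equal degrees mean $\eta_i+\eta_{i+1}$ is constant, so the memberships alternate. For odd $n$ they are therefore uniform; for even $n$ they alternate $a,b$ with $a+b=2m_\mu/n$. This is how I would make ``distributed precisely'' exact.

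\textbf{Maximum.} I read ``a cycle with a star'' as the graph obtained from $C_3$ by attaching $n-3$ pendant vertices to one cycle vertex $w$. The plan has three steps.

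\begin{enumerate}
\item Bound the index from above by $M_1$, using $\sqrt{a^2+b^2}\leqslant a+b$. Since $M_1=\sum_u\mu_u^2$ is convex in the degree vector, for fixed $\sum_u\mu_u=2m_\mu$ it is largest when the degree mass is concentrated at one vertex.
\item Apply graft transformations as in the proof of Theorem~\ref{Thmfuzzytree01}. Any pendant path or tree hanging off a vertex other than $w$ is moved so that it hangs at the maximum-degree vertex $w$, keeping the memberships. Any cycle of length $\geqslant 4$ is shortened to a triangle through $w$. In each move I would compute the change $\Delta$ term by term, exactly as in the proof of Theorem~\ref{MainThmSom3}, and show $\Delta>0$ using monotonicity of $\sqrt{x^2+y^2}$ in each argument.
\item With the support fixed, optimise the memberships as in the star computation of Lemma~\ref{lemstar01}.
\end{enumerate}

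I expect the main obstacle to be the membership optimisation in the maximum part. Because $\SOG$ is homogeneous of degree $2$ in the membership vector and memberships may be arbitrarily small, concentrating nearly all of $m_\mu$ on a single edge can rival or beat balanced configurations. The supremum may then fail to be attained within the class, since all memberships must be positive and bounded by $1$.

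I would first try to restrict, as in Theorem~\ref{Thmfuzzytree01}, to uniform memberships $c=m_\mu/n$ on the support. Under that restriction the comparison reduces to the crisp Sombor index of unicyclic graphs times $c^2$, where $C_3$ with $n-3$ pendants at one vertex is the known maximiser. If that restriction is not acceptable, the statement must be read as a supremum, and I would say so explicitly.
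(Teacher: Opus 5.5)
\textbf{Minimum.} Your argument is correct, and it takes a genuinely different route from the paper's. The paper argues locally. It shifts a small $\varepsilon$ of membership from pendant edges toward cycle edges, but a small shift never changes the support, and its effect on $\SOG$ is never computed. Then, on $C_n$, it appeals to an asserted convexity of $\eta\mapsto\eta\sqrt{x^2+y^2}$ to conclude that equal $\eta_i$ are optimal. Your chain
\[
\SOG(\Gamma)\geqslant \tfrac{1}{\sqrt2}\sum_{uv}\mu(uv)(\mu_u+\mu_v)=\tfrac{1}{\sqrt2}M_1(\Gamma)\geqslant \tfrac{2\sqrt2\,m_\mu^2}{n}
\]
is global and needs no transformations. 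It holds for every fuzzy graph on $n$ vertices, gives the exact minimum value, and comes with a complete equality analysis. That analysis also sharpens the paper's conclusion, which names uniform membership $m_\mu/n$ as the minimiser. For even $n$, alternating memberships $a,b$ with $a+b=2m_\mu/n$ attain the same value.

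\textbf{Maximum.} You do not prove this bullet, but the obstacle you flag is not a technicality: it makes the bullet false as stated.

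The key bound follows from two facts: $\sqrt{\mu_u^2+\mu_v^2}\leqslant\sqrt2\max\{\mu_u,\mu_v\}$, and every fuzzy degree is at most $m_\mu$. Together they give $\SOG(\Gamma)\leqslant\sqrt2\,m_\mu^2$, with strict inequality whenever two or more edges have positive membership. Now take $m_\mu\leqslant 1$. Put $m_\mu-(n-1)\varepsilon$ on one edge of an arbitrary unicyclic support and $\varepsilon$ on the rest; then $\SOG(\Gamma)\to\sqrt2\,m_\mu^2$ as $\varepsilon\to 0$. Three consequences follow:
\begin{itemize}
\item no maximiser exists;
\item the supremum is approached on every support, so even a supremum reading does not single out the cycle-with-star;
\item the uniform cycle-with-star is beaten, since its value is $(m_\mu/n)^2\SO(H)<m_\mu^2$, where $H$ is the triangle with $n-3$ pendants at one vertex.
\end{itemize}

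The paper's own concentration step points the same way. Every unicyclic graph has a cycle edge not incident to the maximum-degree vertex $v$. So if its $\varepsilon$-transfer really strictly increased $\SOG$, it would improve the claimed maximiser too.

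Your uniform-membership restriction is therefore the right repair. It matches how the paper's tables and Theorem~\ref{Thmfuzzytree01} tacitly compute, but it proves a different theorem. Under that restriction your steps (1)--(3) are unnecessary. Every unicyclic support has exactly $n$ edges, so $\SOG(\Gamma)=c^2\,\SO(G)$ with the same $c=m_\mu/n$ throughout. Both bullets then reduce exactly to the crisp results of \cite{Cruz2021Rada}.

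Step (2) would also fail for general memberships as you describe it. A graft lowers the degree of the vertex it leaves, so some terms decrease. Monotonicity of $\sqrt{x^2+y^2}$ alone therefore cannot give $\Delta>0$. In Theorem~\ref{MainThmSom3} membership was only added, which is why monotonicity was enough there.
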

\begin{proof}
Since underlying crisp graph $G$ is connected and contains exactly one cycle. We prove the minimum and maximum separately.

\medskip

 \textbf{Minimum.} Any fuzzy unicyclic graph that is not a pure fuzzy cycle can be shifted into one with a strictly reduced fuzzy Sombor index, conserving interaction, unicyclicity, and the fuzzy size $m_{\mu}$.
 
Assume $G$ has at least one pendant tree connected to the cycle. Let $v$ be a vertex on the cycle with a pendant path or tree connected. Then, by determining a leaf $u$ in this pendant structure and consider shifting a minor positive membership value $\varepsilon > 0$ from an edge in the pendant tree closer to $N(u)$ toward an edge on the cycle itself. Similarly, for a pendent vertex $v$ where $d(u)\neq d(v)$ and $N(u)\neq N(v)$. Since pendant constructions yield vertices with comparatively small fuzzy degrees at the leaves, shifting membership out of them and along the cycle tends to reconcile the degrees more precisely. 

Once the underlying graph is $C_n$, the fuzzy Sombor index reduces to a function of the edge memberships $\eta_1, \eta_2, \dots, \eta_n$ around the cycle with $\sum \eta_i=m_{\mu}$. Therefore, by considering the cyclic symmetry and the convexity properties of the map $\eta \mapsto \eta \sqrt{x^2+y^2}$ where adjacent degrees are linear combinations of consecutive $x_i$. In this case, this function is minimized when all $\eta_i$ are equal. 
Thus, the fuzzy Sombor index $\SOG(\Gamma)$ is minimized when $\Gamma\cong C_n^f$ with uniform edge membership $m_{\mu}/n$.

\textbf{Maximum.} To compare unicyclic fuzzy graphs with $n$ vertices, we use a \emph{concentration transformation}. Assume that $v$ is a vertex with the maximal fuzzy degree $\mu_v = \max_{u \in V(\Gamma)} \mu_u.$
Hypothesise that edge memberships are dispersed over numerous vertices or along pendant paths.

Let $m_1,m_2$ be a tow edges such that $m_2$ is incident to $v$, while $m_1$ is not. Thus, sufficiently small $\varepsilon > 0$, decrease the membership of $m_1$ by $\varepsilon$ and increase the membership of $m_2$ by $\varepsilon$, adjusting the structure if needed so that the graph remains connected and unicyclic.
Thus, let be established the following function $f(x,y) = \sqrt{x^2 + y^2}$ where is strictly increasing and convex in each argument for $x,y > 0$. Increasing a larger degree $\mu_v$ results in an upper degree gain compared to reducing lower degrees. Thus, for sufficiently tiny $\varepsilon$, any transformation strictly raises $\SOG(\Gamma)$.

Other unicyclic configurations distribute degrees more uniformly, resulting in a lower value of $\SOG(\Gamma)$ according to the convexity argument mentioned above.
\end{proof}

These findings apply the conventional extremal behaviour of the Sombor index to the fuzzy situation. Path-like (or cycle-like) patterns in trees and unicyclic graphs reduce the index by promoting degree uniformity, whereas star-like structures maximise it by forming one vertex with a very high fuzzy degree. Continuous edge memberships allow for finer optimisation than in the crisp case, but the underlying topological structure remains the determining factor.

\begin{table}[H]
\centering
\begin{tabular}{|c|cccc|cccc|}
\hline
\multirow{2}{*}{$n$} & \multicolumn{4}{c|}{$m_{\mu}=0.5$} & \multicolumn{4}{c|}{$m_{\mu}=1$} \\
\cline{2-9}
 & Path & Star & Cycle & Complete & Path & Star & Cycle & Complete \\
\hline
6  & 0.2144 & 0.2550 & 0.1179 & 0.1179 & 0.8577 & 1.0198 & 0.4714 & 0.4714 \\
8  & 0.1671 & 0.2525 & 0.0884 & 0.0884 & 0.6685 & 1.0102 & 0.3536 & 0.3536 \\
10 & 0.1360 & 0.2515 & 0.0707 & 0.0707 & 0.5441 & 1.0062 & 0.2828 & 0.2828 \\
12 & 0.1144 & 0.2510 & 0.0589 & 0.0589 & 0.4577 & 1.0041 & 0.2357 & 0.2357 \\
15 & 0.0923 & 0.2506 & 0.0471 & 0.0471 & 0.3692 & 1.0025 & 0.1886 & 0.1886 \\
20 & 0.0697 & 0.2503 & 0.0354 & 0.0354 & 0.2788 & 1.0014 & 0.1414 & 0.1414 \\
25 & 0.0560 & 0.2502 & 0.0283 & 0.0283 & 0.2238 & 1.0009 & 0.1131 & 0.1131 \\
30 & 0.0467 & 0.2501 & 0.0236 & 0.0236 & 0.1869 & 1.0006 & 0.0943 & 0.0943 \\
40 & 0.0351 & 0.2501 & 0.0177 & 0.0177 & 0.1405 & 1.0003 & 0.0707 & 0.0707 \\
50 & 0.0281 & 0.2501 & 0.0141 & 0.0141 & 0.1126 & 1.0002 & 0.0566 & 0.0566 \\
\hline
\end{tabular}
\caption{Comparing adjusted fuzzy Sombor indices with corresponding graph invariants throughout graph families with $m_{\mu}=1$ and $m_{\mu}=0.5$.}
\label{tab01fuzzy}
\end{table}
Table~\ref{tab01fuzzy} displays the adjusted Sombor indices for fundamental graph families. For small $n\leqslant 5$, the Path graph has the maximum value, while the other graphs have identical minimum values.  The growth of $n$ emphasize that $\SOG$ lowers in all families.

The behaviour of the fuzzy Sombor index across  fundamental families of fuzzy graphs with fixed fuzzy size $m_{\mu}=1$ depicts among Figure~\ref{fig01fuzzmu1}. 
In contrast, the fuzzy path achieves the lowest values, indicating a balanced and dispersed structure as shown in Table~\ref{tab01fuzzy}. 
The fuzzy cycle falls somewhere between these two extremes. 

\begin{figure}[H]
    \centering
    \includegraphics[width=0.7\linewidth]{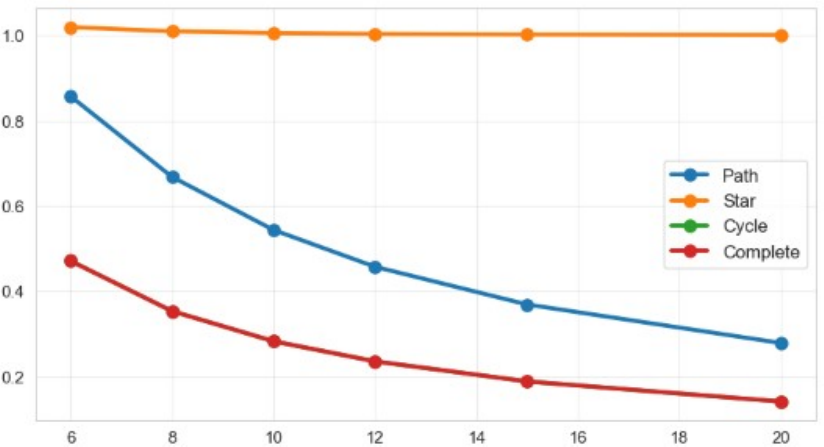}
    \caption{Fuzzy Sombor index in families of graph with $m_{\mu}=1$ where Cycle and Complete become very close.}
    \label{fig01fuzzmu1}
\end{figure}

Figure~\ref{fig02fuzzmu1} demonstrate that the behavior of the fuzzy Sombor index across  fundamental families of fuzzy graphs based on the value of Table~\ref{tab01fuzzy} with fixed fuzzy size $m_{\mu}=0.5$. 
\begin{figure}[H]
    \centering
    \includegraphics[width=0.8\linewidth]{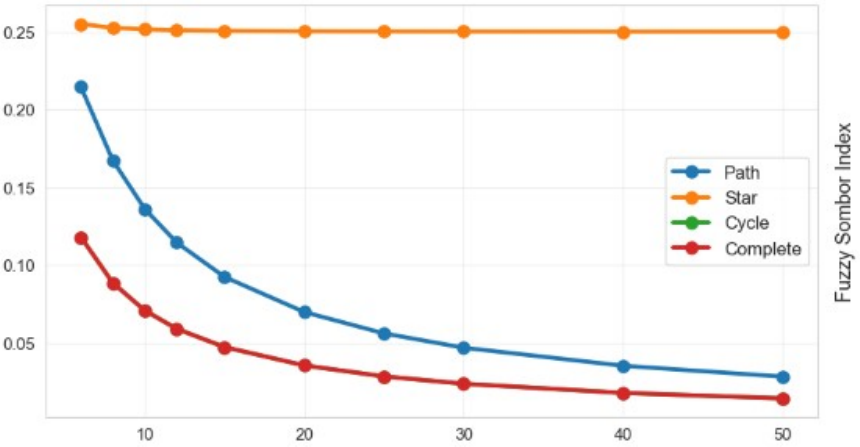}
    \caption{Compare value of Fuzzy Sombor index in families of graph with $m_{\mu}=0.5$.}
    \label{fig02fuzzmu1}
\end{figure}

\section{Relations and Inequalities with Other Fuzzy Topological Indices}

In this section, the fuzzy Sombor index incorporates details regarding edge strengths and vertex degrees in a nonlinear Euclidean algorithm. It is obvious to investigate its link to other well-known fuzzy topological indices. For establishing plenty sharp inequalities between $\SOG(\Gamma)$ and classical fuzzy topological indices. Consider $\nu(v) = 1$ for all vertices unless otherwise specified. 

By recall Definition~\ref{fuzzrand001}, Proposition~\ref{fuzzyprorandic} established the lower bound of $\SOG$ based on $R(\Gamma)$.
\begin{proposition}~\label{fuzzyprorandic}
If all fuzzy degrees are positive,
\[
\SOG(\Gamma) \geq \sqrt{2} \, R(\Gamma) \cdot \delta_{\mu},
\]
where $\delta_{\mu}$ is the minimum positive fuzzy degree. 
\end{proposition}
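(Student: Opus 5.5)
The plan is to prove the inequality edge by edge and then sum over $E(\Gamma)$, since both $\SOG(\Gamma)$ and $R(\Gamma)$ are sums over the same edge set with nonnegative terms. For a fixed edge $uv$ with $\mu(uv)>0$, it suffices to compare the Sombor contribution of $uv$ with $\sqrt{2}\,\delta_\mu\,\mu(uv)/\sqrt{\mu_u\mu_v}$, the Randi\'c contribution of $uv$ (Definition~\ref{fuzzrand001}) scaled by $\sqrt{2}\,\delta_\mu$. The hypothesis that all fuzzy degrees are positive guarantees that $\delta_\mu>0$ and that every quotient in $R(\Gamma)$ is defined.

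Two elementary facts will be used.

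\textbf{(a)} As in the proof of Theorem~\ref{MainThmSom1}, $\mu_u,\mu_v\ge\delta_\mu$, so $\sqrt{\mu_u^2+\mu_v^2}\ge\sqrt{2}\,\delta_\mu$.

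\textbf{(b)} The edge $uv$ is one of the summands of $\mu_u=\sum_{w}\mu(uw)$, and likewise of $\mu_v$. Hence $\mu_u\ge\mu(uv)$ and $\mu_v\ge\mu(uv)$, and therefore $\sqrt{\mu_u\mu_v}\ge\mu(uv)$, that is,
\[
\frac{\mu(uv)}{\sqrt{\mu_u\mu_v}}\le 1 .
\]

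Combining (a) and (b) gives, for each edge,
\[
\sqrt{\mu_u^2+\mu_v^2}\;\ge\;\sqrt{2}\,\delta_\mu\;\ge\;\sqrt{2}\,\delta_\mu\,\frac{\mu(uv)}{\sqrt{\mu_u\mu_v}} .
\]
Summing over $E(\Gamma)$ then yields the bound.

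The main obstacle is the extra factor $\mu(uv)$ in the definition~\eqref{eqq1basicsombor}. If each edge term carries this weight, the edgewise requirement becomes
\[
\mu(uv)\sqrt{\mu_u^2+\mu_v^2}\ge\sqrt{2}\,\delta_\mu\,\frac{\mu(uv)}{\sqrt{\mu_u\mu_v}},
\]
which is equivalent to
\[
\sqrt{\mu_u^2+\mu_v^2}\,\sqrt{\mu_u\mu_v}\ge\sqrt{2}\,\delta_\mu .
\]
The inequality $\mu_u^2+\mu_v^2\ge 2\mu_u\mu_v$ reduces the left side to at least $\sqrt{2}\,\mu_u\mu_v$. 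This gives $\sqrt{2}\,\delta_\mu^2$, not $\sqrt{2}\,\delta_\mu$.

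I would try to close this gap in the following order.
\begin{enumerate}
\item First, determine in which normalisation the proposition is actually intended.
\item If the weighted form is intended, check whether the standing convention of this section forces $\mu_u\mu_v\ge 1$ on every edge. For instance, $\nu\equiv1$ together with enough incident weight at each vertex would do this, and under that condition the edgewise inequality above holds immediately.
\item If no such convention is available, the weighted bound with $\delta_\mu$ can fail. One then falls back on the unweighted edge term, for which (a) and (b) give exactly the stated inequality.
\end{enumerate}

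Equality analysis should follow from the same chain. In (a) equality needs $\mu_u=\mu_v=\delta_\mu$, and in (b) it needs $\mu(uv)=\mu_u=\mu_v$. Together these point to a fuzzy matching (cf.\ Proposition~\ref{MainPro001}) with all edges of equal membership, which I would verify as the sharp case.
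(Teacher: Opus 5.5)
The paper states this proposition without proof, so there is no argument of the paper's to compare yours against. The real issue is the statement itself. Your ``main obstacle'' computation is correct, and it is fatal rather than a gap to be closed: for $\SOG$ as defined in \eqref{eqq1basicsombor}, the proposition is false.

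The cleanest way to see this is homogeneity. Replace every membership $\mu(uv)$ by $t\,\mu(uv)$ with $0<t<1$; this is still admissible and all degrees stay positive. Then $\SOG(\Gamma)$ is multiplied by $t^2$, $\delta_\mu$ is multiplied by $t$, and $R(\Gamma)$ is unchanged. The left side is therefore quadratic in $t$ and the right side linear, so the inequality fails for small $t$.
\begin{itemize}
\item Concretely, $K_2$ with $\mu(uv)=p<1$ has $\SOG=\sqrt2\,p^2$, $R=1$ and $\delta_\mu=p$.
\item More generally, every regular fuzzy graph of fuzzy degree $d$ satisfies $\SOG(\Gamma)=\sqrt2\,d^2R(\Gamma)$, so the claim holds for it if and only if $d\ge 1$.
\item In particular, the sharp case you propose, an equal-membership fuzzy matching with $p<1$, is itself a counterexample.
\item The Cycle row of Table~\ref{tab02fuzzy} already violates the claim: there $\SOG=0.2828$, $R=5$ and $\delta_\mu=0.2$, while $\sqrt2\cdot 5\cdot 0.2=\sqrt2$.
\end{itemize}

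Neither of your fallbacks rescues the statement.
\begin{itemize}
\item Your chain (a)+(b) proves $\sum_{uv}\sqrt{\mu_u^2+\mu_v^2}\ge\sqrt2\,\delta_\mu R(\Gamma)$. That is an inequality for a different, unweighted index, so ``summing over $E(\Gamma)$ yields the bound'' does not prove this proposition.
\item The standing convention $\nu\equiv1$ only bounds memberships from above, so it cannot force $\mu_u\mu_v\ge1$. The paper's own examples spread $m_\mu=1$ over $n$ vertices.
\end{itemize}

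The conclusion you should draw is that the statement needs repair, and your AM--GM step already supplies it. On every edge,
$\sqrt{\mu_u^2+\mu_v^2}\,\sqrt{\mu_u\mu_v}\ge\sqrt2\,\mu_u\mu_v\ge\sqrt2\,\delta_\mu^2$, hence
\[
\SOG(\Gamma)\ \ge\ \sqrt2\,\delta_\mu^2\,R(\Gamma).
\]
Equality holds if and only if $\mu_u=\mu_v=\delta_\mu$ on every edge, i.e.\ if and only if $\Gamma$ is a regular fuzzy graph, since every vertex lies on an edge of positive membership. The bound as originally stated, with $\delta_\mu$ in place of $\delta_\mu^2$, follows only under the extra hypothesis $\delta_\mu\ge 1$, because then $\mu_u\mu_v\ge\delta_\mu^2\ge\delta_\mu$.
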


\begin{theorem}~\label{ZagrebThm01}
For any regular fuzzy graph $\Gamma$,
\begin{equation}~\label{eqq1ZagrebThm01}
\SOG(\Gamma) \leqslant \sqrt{2} \left(\sqrt{(2n-2)\,M_1(\Gamma)} \cdot m_{\mu}\,+m(n-1)\right).
\end{equation}
\end{theorem}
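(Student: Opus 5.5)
The bound has two summands, $\sqrt{2}\sqrt{(2n-2)M_1(\Gamma)}\,m_{\mu}$ and $\sqrt{2}\,m(n-1)$. Both are nonnegative, so it is enough to show that $\SOG(\Gamma)$ is dominated by either one of them. The plan gives two independent routes, one for each summand; either alone would prove~\eqref{eqq1ZagrebThm01}. As elsewhere in the paper, we take $\nu\equiv 1$, so $0\le\mu(uv)\le 1$ for every pair.

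\textbf{Route through the Zagreb term.} For each edge, first use the pointwise bound $\mu(uv)\le 1$. Then apply the Cauchy--Schwarz inequality to $\sum_{uv\in E}\mu(uv)\sqrt{\mu_u^2+\mu_v^2}$ with the weight vector $(\mu(uv))_{uv}$ and the vector $(\sqrt{\mu_u^2+\mu_v^2})_{uv}$. This gives
\[
\SOG(\Gamma)\le \Big(\sum_{uv\in E}\mu(uv)^2\Big)^{1/2}\Big(\sum_{uv\in E}(\mu_u^2+\mu_v^2)\Big)^{1/2}.
\]
Each vertex $u$ appears in $\deg_G(u)\le n-1$ edges. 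Hence
\[
\sum_{uv\in E}(\mu_u^2+\mu_v^2)=\sum_u \deg_G(u)\,\mu_u^2\le (n-1)M_1(\Gamma)\le (2n-2)M_1(\Gamma).
\]
Since every $\mu(uv)\le 1$, we also have $\sum\mu(uv)^2\le\sum\mu(uv)=m_\mu$, so $(\sum\mu(uv)^2)^{1/2}\le\sqrt{m_\mu}$. Together these give $\SOG(\Gamma)\le\sqrt{m_\mu}\sqrt{(2n-2)M_1(\Gamma)}$.

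\textbf{The obstacle on this route.} This is not yet the stated Zagreb term $\sqrt{2}\sqrt{(2n-2)M_1(\Gamma)}\,m_\mu$, because $\sqrt{m_\mu}\le\sqrt2\,m_\mu$ holds exactly when $m_\mu\ge 1/2$. For $m_\mu<1/2$ the Zagreb term alone may fail, and the second summand has to carry the bound. That summand needs the argument below.

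\textbf{Route through Theorem~\ref{Thmper001}.} This route needs no regularity. Theorem~\ref{Thmper001} (Some \& Pal) states $\SOG(\Gamma)\le\sqrt2\,m(n-1)$, which is exactly the second summand. So the second summand dominates $\SOG(\Gamma)$ by itself, and adding the nonnegative Zagreb term preserves the inequality.

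\textbf{Where regularity enters.} It is only needed if one wants a self-contained check rather than a citation. For a regular fuzzy graph with $\mu_u=k$ for all $u$, we have $\sqrt{\mu_u^2+\mu_v^2}=\sqrt2\,k$, so $\SOG(\Gamma)=\sqrt2\,k\,m_\mu$. Also $k\le n-1$ and $m_\mu\le m$. Hence $\SOG(\Gamma)\le\sqrt2(n-1)m$, which reproduces Theorem~\ref{Thmper001} directly.

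I would present this regular computation as the main argument, with the Cauchy--Schwarz estimate as a remark. The main subtlety I expect is keeping the inequality valid for small $m_\mu$, and the route through the second summand handles that case.
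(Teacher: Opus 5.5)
Your proof is correct, but it runs in the opposite direction from the paper's proof.

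\textbf{The paper's route.} The paper tries to dominate $\SOG(\Gamma)$ by the Zagreb summand. It uses $\sqrt{\mu_u^2+\mu_v^2}\le\sqrt2\max_w\mu_w$ to get $\SOG(\Gamma)\le\sqrt2\,m_\mu\max_w\mu_w$. A Cauchy--Schwarz step then brings in $M_1(\Gamma)$, which leads to $\SOG(\Gamma)\le\sqrt2\sqrt{(2n-2)M_1(\Gamma)}\,m_\mu$. Only after that is $\sqrt2\,m(n-1)$ appended as slack. That route is the one that actually relates $\SOG$ to $M_1$, which is the point of the theorem. However, its written chain is loose. Its first display bounds a sum of nonnegative terms by the square root of the sum of their squares, which is the wrong direction. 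The passage to the Zagreb bound is also stated only under the side condition $\SOG(\Gamma)\le\sqrt{(2n-2)M_1(\Gamma)}$.

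\textbf{Your route.} You let the $\sqrt2\,m(n-1)$ summand carry everything and treat the Zagreb term as slack. You get that summand either by citing Theorem~\ref{Thmper001} or from the regular computation $\SOG(\Gamma)=\sqrt2\,k\,m_\mu$ with $k\le n-1$ and $m_\mu\le m$. This is short and fully rigorous. It also shows that regularity is not needed: in any fuzzy graph each edge term is at most $1\cdot\sqrt2(n-1)$.

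\textbf{A correction to your commentary.} The Zagreb summand does not fail for small $m_\mu$. The threshold $m_\mu\ge 1/2$ comes only from your lossy step $\sum\mu(uv)^2\le m_\mu$ inside Cauchy--Schwarz. If you use the weights linearly instead, then $\mu_u^2+\mu_v^2\le M_1(\Gamma)$ holds for every edge. This gives $\SOG(\Gamma)\le m_\mu\sqrt{M_1(\Gamma)}$, which is at most $\sqrt2\sqrt{(2n-2)M_1(\Gamma)}\,m_\mu$ for all $n\ge 2$, since $2\sqrt{n-1}\ge 1$. In the regular case it is immediate: $\SOG(\Gamma)=\sqrt2\,k\,m_\mu$ and $M_1(\Gamma)=nk^2$. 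So each summand alone already bounds $\SOG(\Gamma)$, for every fuzzy graph.
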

\begin{proof}
Recall the fuzzy Zagreb's index according to Definition~\ref{ZagrebIndices001}, and assume $\Gamma$ be a fuzzy regular graph on $n\geqslant 3$ vertices, where 
\begin{equation}~\label{eqq2ZagrebThm01}
\SOG(\Gamma)\leqslant \sqrt{ \sum_{uv} \mu(uv)^2 \cdot (\mu_u^2+\mu_v^2)}.
\end{equation}
Let $\mu_{\max}=\max\{\mu_u,\mu_v\}$ and $\mu_{\min}=\min\{\mu_u,\mu_v\}$. Then, for the maximum term satisfy $\sqrt{\mu_u^2 + \mu_v^2} \leqslant \sqrt{2} \mu_{\max}$ and for the minium term yields $\sqrt{\mu_u^2 + \mu_v^2} \geqslant \sqrt{2} \mu_{\min}.$ Thus, $\SOG_{\max}(\Gamma) \geqslant \sqrt{2} \mu_{\min}$ and $\SOG_{\max}(\Gamma) \leqslant \sqrt{2} \mu_{\max}$. 

According to Theorem~\ref{Thmper001}, if $\SOG(\Gamma)\leqslant \sqrt{2}m(n-1)$. It observers that $\SOG(\Gamma) \leqslant \sqrt{2} \max_u \mu_u \cdot m_{\mu}$. Then, $\SOG_{\max}(\Gamma) \leqslant \sqrt{2} m_\mu\, \mu_{\max}$. Thus, the sharp bound of $\SOG$ yields that 
\begin{equation}~\label{eqq3ZagrebThm01}
\SOG(\Gamma)^2 \leqslant 2 m_{\mu} \sum_{uv} \left(\mu(uv) \left(\mu_{\max}^2+\mu_{\min}^2\right)\sqrt{\max(\mu_u^2, \mu_v^2)}\right) \leqslant 2\,m_{\mu} M_1(\Gamma)\,\max(\mu_u^2, \mu_v^2),
\end{equation}
where 
\[
\sum_{uv} \mu(uv) \sqrt{\mu_u^2 + \mu_v^2} \leqslant \sqrt{ \left( \sum_{uv} \mu(uv) \right) \left( \sum_{uv} \mu(uv) (\mu_u^2 + \mu_v^2) \right) }.
\]
Therefore, based on~\eqref{eqq3ZagrebThm01} and when $\SOG(\Gamma)\leqslant\sqrt{(2n-2)\,M_1(\Gamma)}$ we obtain 
\begin{equation}~\label{eqq4ZagrebThm01}
\SOG(\Gamma) \leqslant \sqrt{2} \left(\sqrt{(2n-2)\,M_1(\Gamma)} \cdot m_{\mu}\right).
\end{equation}
 Thus, from~\eqref{eqq4ZagrebThm01} the relationship~\eqref{eqq1ZagrebThm01} hold.
\end{proof}
In the previous theorem, the first Zagreb index for a fuzzy graph was considered in the regular case, that is, when all vertex degrees are equal. In this situation, the maximum and minimum degrees coincide. We now extend these results to the case where the fuzzy graph is irregular. In this case, we find that the sharp bound of this index satisfies the following:
\begin{equation}~\label{eqq5ZagrebThm01}
\SOG(\Gamma) \leqslant 2 m_{\mu} \sum_{uv} \left(\mu(uv) \left(\mu_{\max}^2+\mu_{\min}^2\right)\sqrt{(2n-2)M_1(\Gamma)}\right).
\end{equation}

\begin{theorem}~\label{normThm001}
Let $\Gamma$ be a fuzzy graph on $n\geqslant 3$ vertices with maximum degree $\Delta(\Gamma)=\max\{\mu_u+\mu_v\},$ where $uv\in E(\Gamma)$ and $\mu_u\neq \mu_v$. Then, 
\begin{equation}~\label{eqq1normThm001}
\SOG(\Gamma) \leqslant \frac{1}{\sqrt{2}}\, m_{\mu}\,\Delta(\Gamma)
+ \sqrt{\sum_{uv\in E(\Gamma)} 
\frac{(\mu_u+\mu_v)\,\max\{\mu_u^2,\mu_v^2\}}
{2\,\mu_u\,\mu_v\,\min\{\mu_u^2,\mu_v^2\}} }.
\end{equation}
if and only if 
\[
\SOG(\Gamma) \leqslant \frac{\Delta_{\mu}(\Gamma)+\delta_{\mu}(\Gamma)}{\sqrt{2}} \, N(\Gamma),
\]
where $\Delta_{\mu}(\Gamma) = \max_u \mu_u$ and $\delta_{\mu}(\Gamma) = \min_u \mu_u$.
\end{theorem}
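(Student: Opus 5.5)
The plan is to read the statement as an equivalence between two upper bounds. I would establish both from one per-edge decomposition of the summand $\mu(uv)\sqrt{\mu_u^2+\mu_v^2}$. For an edge $uv$ write $a=\max\{\mu_u,\mu_v\}$ and $b=\min\{\mu_u,\mu_v\}$. Then split
\[
\sqrt{a^2+b^2}=\frac{a+b}{\sqrt2}+r_{uv},\qquad r_{uv}=\frac{(a-b)^2}{2\bigl(\sqrt{a^2+b^2}+(a+b)/\sqrt2\bigr)}\geqslant 0 .
\]
The first piece gives the main term. Since $\mu_u+\mu_v\leqslant\Delta(\Gamma)$ and $\sum_{uv}\mu(uv)=m_\mu$, we get $\sum_{uv}\mu(uv)(\mu_u+\mu_v)/\sqrt2\leqslant m_\mu\Delta(\Gamma)/\sqrt2$, which is exactly the first summand in \eqref{eqq1normThm001}. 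The hypothesis $\mu_u\neq\mu_v$ guarantees that $r_{uv}>0$, so the correction term is genuinely needed.

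The second step controls $\sum_{uv}\mu(uv)r_{uv}$ by the square-root term through Cauchy--Schwarz. I would write $\mu(uv)r_{uv}=x_{uv}\,y_{uv}$ with
\[
y_{uv}^2=\frac{(\mu_u+\mu_v)\,a^2}{2\mu_u\mu_v\,b^2}.
\]
It then remains to check that $\sum x_{uv}^2\leqslant1$. This would use $\mu(uv)\leqslant\min\{\nu(u),\nu(v)\}\leqslant1$ and $\mu(uv)\leqslant b$ (the edge contributes to both fuzzy degrees), together with the crude estimate $r_{uv}\leqslant (a-b)^2/(\sqrt2(a+b))$. The aim is to make the ratio $a^2/b^2$ in the radicand absorb the factor $(a-b)^2$. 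I expect this to be the main obstacle. The normalisation $\sum x_{uv}^2\leqslant1$ is not scale invariant, so I would first try to use $\mu(uv)\leqslant b$ to trade one power of $\mu(uv)$ for $b$. If that fails, I would weaken the claim to a bound with an explicit constant in front of the square root.

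For the second inequality, the Nirmala form, I would compare termwise with Definition~\ref{deffNirmala001}. It suffices to show
\[
\mu(uv)\sqrt{\mu_u^2+\mu_v^2}\,\sqrt{\mu_u\mu_v}\leqslant\frac{\Delta_\mu+\delta_\mu}{\sqrt2}\,(\mu_u+\mu_v)
\]
for every edge. I would use $\sqrt{\mu_u^2+\mu_v^2}\leqslant\mu_u+\mu_v$ and $\mu(uv)\leqslant b$, which reduces this to $b\sqrt{ab}\leqslant(\Delta_\mu+\delta_\mu)/\sqrt2$. I would then check this with $b\leqslant1$, $a\leqslant\Delta_\mu$ and the AM--GM inequality $\sqrt{ab}\leqslant(a+b)/2$. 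Some case analysis may be needed, according to whether $\Delta_\mu\leqslant1$ or not.

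Finally, the ``if and only if'' would be obtained by showing that both right-hand sides dominate the same intermediate quantity
\[
\sum_{uv}\mu(uv)\frac{\mu_u+\mu_v}{\sqrt2}+\sum_{uv}\mu(uv)r_{uv}.
\]
Each bound is then equivalent to the validity of this common estimate, and the equivalence follows.
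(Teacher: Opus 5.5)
There is a genuine gap, and it is the step you flagged yourself. The bound $\sum_{uv}\mu(uv)r_{uv}\leqslant\sqrt{\sum_{uv}y_{uv}^2}$ cannot be established, because inequality \eqref{eqq1normThm001} is false.

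\textbf{Counterexample to the first bound.} Take the crisp star $K_{1,17}$ with $\nu\equiv 1$ and every edge membership equal to $1$. Every edge has $\mu_u=17\neq 1=\mu_v$, so the hypothesis holds. We have $m_\mu=17$ and $\Delta(\Gamma)=18$. Each summand of the radicand is $\frac{18\cdot 17^2}{2\cdot 17}=153$, so the square-root term is $\sqrt{17\cdot 153}=51$. The right-hand side of \eqref{eqq1normThm001} is therefore $153\sqrt2+51<268$, whereas $\SOG(K_{1,17})=17\sqrt{290}>289$.

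Since $\mu_u+\mu_v=\Delta(\Gamma)$ on every edge, the first term has no slack. Your correction $\sum\mu(uv)r_{uv}=17\sqrt{290}-153\sqrt2\approx 73$ exceeds $51$. Your fallback of a constant in front of the root also fails. For $K_{1,k}$ with unit memberships, $\sum\mu(uv)r_{uv}\sim(1-1/\sqrt2)k^2$, while the square-root term is $k\sqrt{(k+1)/2}\sim k^{3/2}/\sqrt2$. The ratio grows like $\sqrt{k}$.

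The scale non-invariance you noticed is the cause. Under $\mu\mapsto t\mu$ the radicand scales like $t^{-1}$ while $\SOG$ scales like $t^{2}$. So the root term only dominates for small memberships, and it loses once memberships are near $1$ and the degree disparity is large.

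\textbf{The Nirmala bound.} This bound is always true, but your reduction to it is wrong. First, $b=\min\{\mu_u,\mu_v\}$ is a fuzzy degree, not a membership, so $b\leqslant 1$ fails in general. Second, the reduced claim $b\sqrt{ab}\leqslant(\Delta_\mu+\delta_\mu)/\sqrt2$ is false for large degrees: crisp $K_{5,6}$ gives $5\sqrt{30}>11/\sqrt2$.

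The correct termwise argument uses $\mu(uv)\leqslant 1$ together with $2b^3\leqslant a^3+ab^2$. The latter is equivalent to $\sqrt{ab(a^2+b^2)}\leqslant a(a+b)/\sqrt2$. Together they give $\mu(uv)\sqrt{\mu_u^2+\mu_v^2}\leqslant\frac{\Delta_\mu}{\sqrt2}\cdot\frac{\mu_u+\mu_v}{\sqrt{\mu_u\mu_v}}$ on every edge, and summing yields the Nirmala bound.

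\textbf{Consequence for the equivalence.} The claimed ``if and only if'' therefore reduces to \eqref{eqq1normThm001} holding. The star above satisfies the Nirmala bound but violates \eqref{eqq1normThm001}, so the statement cannot be proved by any route. Your ``common intermediate quantity'' is just $\SOG(\Gamma)$ itself, so that final step could only ever prove both inequalities outright.

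The paper's own proof does not supply a missing idea either. It uses $\sqrt{\mu_u^2+\mu_v^2}\leqslant(\mu_u+\mu_v)/\sqrt2$, which is the reverse of the true inequality whenever $\mu_u\neq\mu_v$. That is precisely the case the hypothesis singles out.
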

\begin{proof}
Recall the relationship of $N(\Gamma)$ from Definition~\ref{deffNirmala001} and according to Theorem~\ref{ZagrebThm01}, we find that if $\Gamma$ is regular fuzzy graph, $\SOG(\Gamma) \leqslant \sqrt{2} \max_u \mu_u \cdot m_{\mu}$. Thus, if $uv\in E(\Gamma)$ and $\mu_u=\mu_v$, it observe that $\mu_u+\mu_v\geqslant \sqrt{2\,(\mu_u^2+\mu_v^2)}$. Therefore, 
\begin{equation}~\label{eqq2normThm001}
\SOG(\Gamma)\leqslant \frac{1}{\sqrt{2}} \sum_{uv} \mu(uv) (\mu_u+\mu_v)\leqslant \frac{1}{\sqrt{2}} \, m_{\mu} \cdot \max_{uv} (\mu_u + \mu_v)
\end{equation}
Hence, based on~\eqref{eqq2normThm001} and by considering $m_{\mu}$, yields 

\begin{align*}
\mu_{\max}\,\sqrt{\mu_u^2+\mu_v^2}
&\leqslant \SOG(\Gamma)
\leqslant \mu_{\max}\,(\mu_u^2+\mu_v^2),\\
\Delta(\Gamma)\,\sqrt{\mu_u^2+\mu_v^2}
&\leqslant \SOG(\Gamma)
\leqslant \Delta(\Gamma)\,(\mu_u^2+\mu_v^2).
\end{align*}

It follows that $$\SOG(\Gamma)\leqslant \sqrt{\Delta(\Gamma)\,(\mu_u^2+\mu_v^2)+(\mu_u+\mu_v)\,\max\{\mu_u^2,\mu_v^2\}}.$$ 
Since $\mu_u + \mu_v \leq 2 \Delta_{\mu}$, the relationship~\eqref{eqq2normThm001} implies that 
\begin{equation}~\label{eqq3normThm001}
\SOG(\Gamma)\leqslant \sqrt{\sum_{uv\in E(\Gamma)} 
\frac{m_{\mu}\Delta(\Gamma)(\mu_u+\mu_v)\,\max\{\mu_u^2,\mu_v^2\}}
{2\,\mu_u\,\mu_v\,\min\{\mu_u^2,\mu_v^2\}} }.
\end{equation}
Therefore, it established the sharp bound of $\SOG$, 
\begin{equation}~\label{eqq4normThm001}
\SOG(\Gamma) \leqslant \frac{\Delta_{\mu}}{\sqrt{2}} \sum_{uv} \mu(uv) \cdot \frac{\mu_u + \mu_v}{\Delta_{\mu}} \leqslant \frac{\Delta_{\mu}}{\sqrt{2}} N(\Gamma),
\end{equation}
Thus, from~\eqref{eqq3normThm001} and \eqref{eqq4normThm001} the relationship~\eqref{eqq1normThm001} holds.
\end{proof}
Figure ~\ref{fig03fuzzmu1} compares four significant fuzzy topological indices, where the visualisation demonstrates the unique behaviour of each index. 

\begin{figure}[H]
    \centering
    \includegraphics[width=1\linewidth]{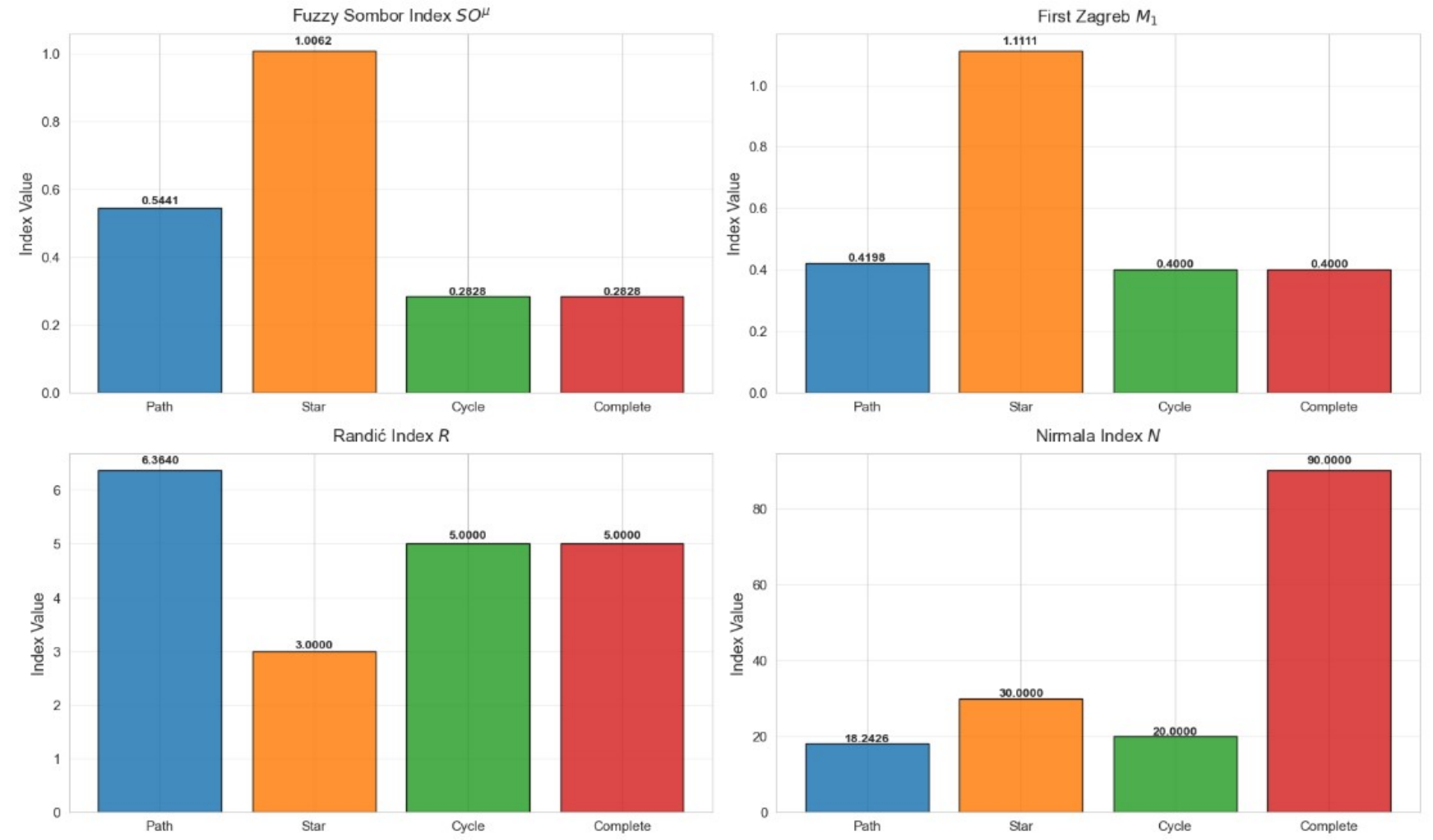}
    \caption{The Fuzzy Sombor Index, along with other essential indices, includes $M_1(\Gamma)$, $R(\Gamma)$ and $N(\Gamma)$.}
    \label{fig03fuzzmu1}
\end{figure}
The value under the identical settings, with the fuzzy Sombor index responding more sensitively to structural centralisation than the other indices.
The results among Table~\ref{tab02fuzzy} reveal that the Star graph has the maximum Sombor index, whilst the Path graph has the maximum values in the other three indices. Both the Cycle and Complete graphs produce identical results for $\SOG$, $M_1$, and $R$, as expected given their structural similarities in this sequence.
\begin{table}[H]
\centering
\begin{tabular}{|l|cccc|}
\hline
Graph     & $\SOG$ & $M_1$   & $R$   & $N$     \\
\hline
Path      & 0.5441    & 0.4198     & 6.364    & 18.2426    \\\hline
Star      & 1.0062    & 1.1111     & 3.000    & 30.0000    \\\hline
Cycle     & 0.2828    & 0.4000     & 5.000    & 20.0000    \\\hline
Complete  & 0.2828    & 0.4000     & 5.000    & 90.0000    \\
\hline
\end{tabular}
\caption{An essential indices for $n=10$, $m_{\mu}=1.0$.}
\label{tab02fuzzy}
\end{table}

To extended the study on $\SOG$, we present the impact $\SOG$  on extremal fuzzy trees. 
The examination of extremal fuzzy trees for the fuzzy Sombor index displays a distinct fundamental influence that parallels rather refines the classical crisp instance. In the crisp adjusting, the path $\PP_n$ minimises and the star $\SN_n$ maximises the majority of degree-based indices, especially the Sombor index. In the fuzzy framework, the same topological structures remain extremal, but continuous edge memberships $c \in (0,1]$ add suppleness.

The fuzzy path $\PP_n^f$ with practically identical edge memberships obtains the smallest value $\mu_{\min}$ by preserving the most symmetrical fuzzy degree sequence attainable under the tree constraint. The terms $\mu(uv)\sqrt{\mu_{\max}^2+\mu_{\min}^2}$ remains minimal since neighbouring vertices have similar fuzzy degrees $\delta_{\mu}$. The aforementioned setup is particularly effective in modelling linear or chain-like uncertain networks, when the objective is to minimise "structural intensity," as assessed by the Sombor index. Also, the fuzzy star $\SN_n^f$ with universal edge membership $c=m_{\mu}/(n-1)$ focuses the full fuzzy size on edges incident $m_{\mu}$ to a single center vertex. The findings in an enormous fuzzy degree $m_{\mu}$ coupled with many little degrees $c$, maximising every element $\sqrt{\mu_c^2+c^2}$ due to the quadratic increase of the Euclidean term. As a result, the fuzzy Sombor index grows roughly as $m_{\mu}^2$, illustrating the index's sensitivity to hub-like structures. In behaviour, this implies that the fuzzy Sombor index may efficiently detect or rate centralisation and vulnerability in fuzzy networks.

\section{Conclusion}
This paper presents a thorough study of the extremal characteristics of the fuzzy Sombor index and its general form $\SOG_{\alpha}(\Gamma)$ for $\alpha>1$. We produced strong lower and upper bounds on $n$ vertices, demonstrating that the fuzzy path (with balanced memberships) reduces the index among connected fuzzy graphs, whereas the uniform fuzzy full graph increases it. These findings have been improved for the significant families of fuzzy trees and fuzzy unicyclic graphs, where the star and tadpole structures appear as maximizes and paths/cycles as minimizes.

The inequalities derived between the fuzzy Sombor index and other fuzzy topological indices (Zagreb, Randić, Nirmala), emphasising its intermediate position between linear and quadratic measures. Edge memberships are continuous, which allows for finer optimisation than in the crisp case. Then, $\SOG$ performs well as a sensitive descriptor of both degree variance and edge strength concentration.




\begin{thebibliography}{9}

\bibitem{Geometric2021approach} I.~Gutman, Geometric approach to degree-based topological indices: Sombor indices, {\it MATCH~Commun.~Math.~Comput.~Chem.} \textbf{86} (1) (2021) 11--16.

\bibitem{Cruz2021Gutman} R.~Cruz, I.~Gutman, \& J.~Rada, Sombor index of chemical graphs, {\it Appl.~Math.~Comp.} \textbf{399}(126018) (2021), \href{https://doi.org/10.1016/j.amc.2021.126018}{DOI: 10.1016/j.amc.2021.126018}.

\bibitem{Cruz2021Rada} R.~Cruz, \& J.~Rada, Extremal values of the Sombor index in unicyclic and bicyclic graphs, {\it J.~Math.~Chem.} \textbf{59} (2021), 1098--1116, \href{https://doi.org/10.1007/s10910-021-01232-8}{DOI: 10.1007/s10910-021-01232-8}.

\bibitem{cruz2020extremal} R.~Cruz, J.~Monsalve, \& J.~Rada, Extremal values of vertex-degree-based topological indices of chemical trees, {\it Appl.~Math.~Comput.} \textbf{380} (2020), 125281, \href{https://doi.org/10.1016/j.amc.2020.125281}{DOI: 10.1016/j.amc.2020.125281}.

\bibitem{cruz2021threependent}
R.~Cruz, J.~Rada, \& J.~M.~Sigarreta, Sombor index of trees with at most three branch vertices, {\it Appl.~Math.~Comput.} \textbf{409}(126414) (2021), \href{https://doi.org/10.1016/j.amc.2021.126414}{DOI: 10.1016/j.amc.2021.126414}. 


\bibitem{Das2021Cevik} K.~C.~Das, A.~S.~Cevik, I.~N.~Cangul, \& Y.~Shang, On Sombor Index, 
\emph{Symmetry},  \textbf{13}(1) (2021), \href{https://doi.org/10.3390/sym13010140}{DOI: 10.3390/sym13010140}. 

\bibitem{Das2021Shang} K.~C.~Das, \& Y.~Shang, Some Extremal Graphs with Respect to Sombor Index, {\it Mathematics.} \textbf{9} (2021), \href{https://doi.org/10.3390/math9111202}{DOI: 10.3390/math9111202}.


\bibitem{Todeschini2000} R.~Todeschini, \& V.~Consonni, Handbook of Molecular Descriptors, Wiley–VCH: Weinheim, Germany, 2000.

\bibitem{Some2025Pal} B.~Some, \& A.~Pal, Sombor index of fuzzy graph, {\it TWMS~J.~App.~and Eng.~Math.} \textbf{15}(6) (2025), 1325--1346.

\bibitem{Poulikrad2024} S.~Poulik, G.~Ghorai, \& Q.~Xin, Explication of crossroads order based on Randic index of graph with fuzzy information,  {\it Soft Computing.} \textbf{28}(3) (2024), 1851--1864, \href{https://doi.org/10.1007/s00500-023-09453-6}{DOI: 10.1007/s00500-023-09453-6}.

\bibitem{Kalathian2020S} S.~Kalathian, S.~Ramalingam, S.~Raman, \& N.~Srinivasan, Some topological indices in fuzzy graphs, {\it J.~Intell.~Fuzzy~Syst.} \textbf{39} (2020)6033--6046, \href{http://dx.doi.org/10.3233/JIFS-189077}{DOI: 10.3233/JIFS-189077}. 

\bibitem{sunitha1999fuzzytrees}
M.~S.~Sunitha, \& A.~Vijayakumar, A characterization of fuzzy trees,{\it Inf.~Sci.}  \textbf{113} (3--4) (1999), 293--300, \href{https://doi.org/10.1016/S0020-0255(98)00135-3}{DOI: 10.1016/S0020-0255(98)00135-3}.


\bibitem{Akram2023fuzzy} M.~Akram, A.~Habib, \& J.~C.~R.~Alcantud, Fuzzy topological indices with application to cybercrime problem, \emph{Granular Computing}. \textbf{8} (2023),  1549--1569, \href{https://doi.org/10.1007/s41066-023-00365-2}{DOI: 10.1007/s41066-023-00365-2}.

\bibitem{Kosari2023some}
S.~Kosari, Y.~Rao, H.~Zhang, \& A.~Rezaei, Some indices of picture fuzzy graphs and their applications, \emph{Computational and Applied Mathematics}. \textbf{42} (281) (2023), \href{https://doi.org/10.1007/s40314-023-02393-9}{DOI: 10.1007/s40314-023-02393-9}.




\end{thebibliography}
\end{document}